\numberwithin{equation}{section}
\def\R{\mathbb R}
\newcommand{\dist}{\mathop{\mathrm{dist}}}
\def\trace{{\rm tr}}
\def\dist{{\rm dist}}
\def\00{{\bf 0}}
\newcommand{\tr}{\mbox{tr}\,}
\newcommand{\diver}{{\rm div \,}}
\newtheorem*{theorem*}{Theorem}
\newtheorem{theorem}{Theorem}[section]
\newtheorem{lemma}[theorem]{Lemma}
\newtheorem{proposition}[theorem]{Proposition}
\newtheorem{corollary}[theorem]{Corollary}
\newtheorem{definition}[theorem]{Definition}
\begin{document}
  
    \title[]{Classification and non-existence results for weak solutions to quasilinear elliptic equations with Neumann or Robin boundary conditions}
    
  \date{}

\author{Giulio Ciraolo}
\address{G. Ciraolo. Dipartimento di Matematica "Federigo Enriques",
Universit\`a degli Studi di Milano, Via Cesare Saldini 50, 20133 Milano, Italy}
\email{giulio.ciraolo@unimi.it}

\author{Rosario Corso}
\address{R. Corso. Dipartimento di Matematica e Informatica,
	Universit\`a degli Studi di Palermo, Via Archirafi 34, 90123 Palermo, Italy
}
\email{rosario.corso02@unipa.it}

\author{Alberto Roncoroni} 
\address{A. Roncoroni. Dipartimento di Matematica e Informatica ``Ulisse Dini'', Universit\`a degli Studi di Firenze, Viale Morgagni 67/A, 50134 Firenze, Italy}
\email{alberto.roncoroni@unifi.it}

    \keywords{Classification of solutions; Non-existence; Quasilinear anisotropic elliptic equations; Liouville-type theorem. 
    }
    \subjclass{Primary 35J92; 35B53; 35B09. Secondary 35B33.} 

\begin{abstract}
We classify positive solutions to a class of quasilinear equations with Neumann or Robin boundary conditions in convex domains. Our main tool is an integral formula involving the trace of some relevant quantities for the problem. 

Under a suitable condition on the nonlinearity, a relevant consequence of our results is that we can extend to weak solutions a celebrated result obtained for stable solutions by Casten and Holland and by Matano.
\end{abstract}

\maketitle


\section{Introduction}
Given a bounded domain $\Omega \subset \mathbb{R}^n$, $n\geq 2$, we consider solutions to quasilinear equations of the form
\begin{equation} \label{p_lapl_eq}
\Delta_p u + f(u) = 0 \quad \text{ in } \Omega \,,
\end{equation}
with $1<p<n$ and where 
$$\Delta_p u := \diver(|\nabla u|^{p-2} \nabla u ) $$ is the so-called $p$-Laplace operator. Our main goal is to give classification and non-existence results for \eqref{p_lapl_eq} (and for more general quasilinear operators) once Neumann or Robin boundary conditions are prescribed at the boundary of $\Omega$. 

The study of non-existence results for \eqref{p_lapl_eq} under Neumann boundary conditions has a long history in the PDE's community, which started with the celebrated papers of Casten and Holland \cite{CastenHolland} and Matano \cite{Matano}: if $\Omega$ is a convex domain, then any non-constant solution to
\begin{equation} \label{pb_neum_semilinear}
\begin{cases}
\Delta u + f(u) = 0 & \text{ in } \Omega \\
\partial_\nu u = 0 & \text{ on } \partial \Omega
\end{cases}
\end{equation}
is, if it exists, unstable.
This result was extended to more general settings as Riemannian manifolds and to classical solutions to more general operators in \cite{BMMP,BPT,DPV,DPV2,Jimbo} (see also \cite{Cabre_Poggesi} and \cite{Dupaigne} for a general reference on stable solutions).

Our main goal is to prove classification and non-existence results for Neumann or Robin type boundary value problems involving the equation
\begin{equation} \label{eq_general}
\diver(a(\nabla u))  + f(u) = 0,
\end{equation}
with 
\begin{equation} \label{a_def}
a(\xi)=H(\xi)^{p-1} \nabla H(\xi)\qquad \forall\,\xi \in \R^n,
\end{equation}
where $H$ is a norm in $\mathbb{R}^n$ (see Subsection \ref{subsect_notation} for more details), $1<p<n$ and $\Omega\subset \mathbb{R}^n$ is a bounded convex domain. Notice that if the norm $H(\cdot)$ is the Euclidean norm $|\cdot |$ then $\diver(a(\nabla u))$ is the usual $p$-Laplace operator and equation \eqref{eq_general} reduces to \eqref{p_lapl_eq}. More generally, in this paper we assume that $H$ is a norm of $\mathbb R^n$ such that $H^2$ is of class $C^{2}(\R^n\setminus \{\mathcal O\})$ and it is uniformly convex and $C^{1,1}$ in $\R^n$, i.e. there exist constants $0<\lambda \leq \Lambda$ such that
\begin{equation}
\label{eq:elliptic H}
\lambda {\rm Id}\leq H(\xi)\,D^2H(\xi)+\nabla H(\xi)\otimes \nabla H(\xi) \leq \Lambda\,{\rm Id}\qquad \forall\,\xi \in \R^n\setminus \{\mathcal O\} \,.
\end{equation}

We consider boundary value problems having Neumann or Robin type conditions at the boundary. More precisely, we shall assume that 
\begin{equation}\label{bc_general}
a(\nabla u) \cdot \nu + h(u) = 0 \quad \text{ on } \partial \Omega \,,
\end{equation}
where $h$ satisfies some assumption to be specified later. 

In this paper, we are not considering stable solutions; instead we are considering a general weak solution to \eqref{eq_general} with boundary condition \eqref{bc_general}. 

\begin{definition} We say that $u\in W^{1,p}_{loc}(\Omega)$ is a weak solution to \eqref{eq_general} and \eqref{bc_general} if $f(u) \in L^1_{loc}(\Omega)$ and it satisfies
\begin{equation}\label{def_debole}
- \int_{\partial \Omega} h(u) \phi d\sigma - \int_\Omega a(\nabla u) \cdot \nabla \phi \, dx + \int_\Omega f(u) \phi \,dx= 0 \quad \forall \phi \in C_c^1(\mathbb{R}^n) \,.
\end{equation}
\end{definition}

\medskip

Our first main result is a classification result for a Neumann problem for positive weak solutions to \eqref{eq_general} in a convex domain.

\begin{theorem} \label{thm_main}
Let $\Omega\subset \mathbb{R}^n$ be a bounded convex domain and let $H$ be a norm in $\mathbb{R}^n$ such that $H^2 \in C^{2}(\R^n\setminus \{\mathcal O\}) \cap C^{1,1}(\R^n)$ and satisfies \eqref{eq:elliptic H}. Let $f \in C^1(\mathbb R)$ be such that the function 
\begin{equation} \label{Phi_def} 
\Phi(t) := \frac{f(t)}{t^{p^*-1}} \quad \textmd{ is nonincreasing. }
\end{equation}
Then there exist no positive bounded weak solutions $u$ to
\begin{equation} \label{pb_main}
\begin{cases}
\diver(a(\nabla u))  + f(u) = 0 & \text{ in }\Omega \\
a(\nabla u ) \cdot \nu = 0 & \text{ on } \partial \Omega \,
\end{cases}
\end{equation}
unless $u$ is constant.
\end{theorem}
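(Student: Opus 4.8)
The plan is to establish a Rellich–Pohozaev–type integral identity for weak solutions and combine it with the convexity of $\Omega$ together with the monotonicity hypothesis \eqref{Phi_def} on $\Phi$. Concretely, I would introduce $P$-functions and the vector field commonly used in this circle of ideas. Let $F(t)=\int_0^t f(s)\,ds$ and set, for the anisotropic $p$-energy density $\frac{p-1}{p}H(\nabla u)^p$, the $P$-function
\begin{equation*}
P(u) := \frac{p-1}{p}H(\nabla u)^p + F(u).
\end{equation*}
The first key step is to derive, in a weak sense and after the usual elliptic/Calderón–Zygmund regularity for positive bounded solutions to \eqref{eq_general} (so that $u\in C^{1,\alpha}_{loc}$ and second derivatives exist where $\nabla u\neq 0$), a divergence-form differential inequality for $P$ and then integrate it. The natural companion identity is the anisotropic analogue of the Pohozaev identity obtained by testing \eqref{def_debole} with $\phi = x\cdot\nabla u$ (suitably cut off) and with $\phi = u$; the boundary condition $a(\nabla u)\cdot\nu=0$ makes the boundary terms involving $f$ and $F$ collapse, and the term coming from the tangential derivatives on $\partial\Omega$ carries the second fundamental form of $\partial\Omega$, hence a sign by convexity.

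The second, and I expect principal, step is to show that the monotonicity of $\Phi(t)=f(t)/t^{p^*-1}$ forces the ``bulk'' term in the resulting identity to have a definite sign, so that equality — hence constancy of $u$ — is the only possibility. This is where the critical Sobolev exponent $p^*=\frac{np}{n-p}$ enters: the Pohozaev balance for $\diver(a(\nabla u))$ against a pure power $|u|^{p^*-2}u$ is scale-invariant, and writing $f(u)=\Phi(u)u^{p^*-1}$ lets one isolate $\int_\Omega \Phi'(u)\,(\,\cdot\,)\,dx$ with $\Phi'\le 0$. I would combine this with the integrated $P$-function inequality: the classical argument (Casten–Holland, Matano, and the anisotropic/quasilinear refinements in \cite{BMMP,DPV,DPV2}) shows that on a convex domain with Neumann data the normal derivative of $P$ on $\partial\Omega$ is controlled by $-H(\nabla u)^{p}\,\mathrm{II}(\nabla_\tau u,\nabla_\tau u)\le 0$, so $P$ is subharmonic-type and $\int_\Omega H(\nabla u)^{p-2}\,(\text{Hessian terms})\,dx$ is nonnegative; together these squeeze all the inequalities to equalities.

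Finally, from the equality cases I would extract the rigidity: the vanishing of the convexity (boundary) term together with the vanishing of the anisotropic Bochner/Hessian term forces $\nabla u$ to be, where nonzero, an eigen-type configuration that is incompatible with nonconstancy under a Neumann condition on a bounded domain — precisely, $H(\nabla u)^{p-2}D^2u$ must be a multiple of the identity in the $H$-metric and the level sets must be parallel hyperplanes meeting $\partial\Omega$ orthogonally, which on a bounded convex $\Omega$ is impossible unless $\nabla u\equiv 0$. The main obstacle is the \emph{low regularity}: $u$ is only a bounded weak solution, so the $P$-function computations and the Pohozaev test functions must be justified by approximation (truncating near the critical set $\{\nabla u=0\}$, using the $C^{1,\alpha}$ regularity away from it, and a careful limiting argument to pass the cutoffs), and one must handle the degeneracy of $H(\nabla u)^{p-2}$ when $p>2$ or its singularity when $p<2$. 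I expect the bulk of the work, and the genuinely new input of the paper relative to the smooth-solution literature, to be exactly this: pushing the integral identity through for merely weak solutions, presumably via the ``integral formula involving the trace'' advertised in the abstract.
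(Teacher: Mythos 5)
Your proposal correctly anticipates the three structural ingredients (an integral identity, the second fundamental form appearing with a sign by convexity, and a bulk term carrying $\Phi'\le 0$), but the concrete mechanisms you propose would not close the argument, and they are not the ones the paper uses. The paper does not work with the $P$-function $\tfrac{p-1}{p}H(\nabla u)^p+F(u)$ nor with the Pohozaev multiplier $x\cdot\nabla u$. Instead it performs the nonlinear substitution $u=v^{-\frac{n-p}{p}}$ (Gidas--Spruck / Serrin--Zou / Bidaut-V\'eron--V\'eron style), writes the transformed equation for $v$, and exploits a pointwise divergence identity for the second elementary symmetric function $S^2(W)$ of $W=\nabla\bigl(\hat a(\nabla v)\bigr)$, combined with the Newton-type inequality $S^2(W)\le\frac{n-1}{2n}\bigl(\tr W\bigr)^2$. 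It is precisely this substitution and this inequality that manufacture the quantity $\int_\Omega u^{\frac{n}{n-p}}H^p(\nabla u)\,\Phi'(u)\,dx$ with the exponent $p^*-1$ inside $\Phi$. Your proposal never explains where $p^*-1$ would come from: the scale invariance you invoke is the Dirichlet/star-shaped Pohozaev mechanism, and for a Neumann problem on a convex (not necessarily star-shaped) domain the boundary terms generated by $x\cdot\nabla u$ — in particular $(x\cdot\nu)\,F(u)$ and $(x\cdot\nu)\,H(\nabla u)^p$ — do not collapse and have no useful sign. Likewise, the claim that $P$ is "subharmonic-type" is unsupported: $\Div\!\left(H(\nabla u)^{p-2}\nabla P\right)$ contains the term $-f'(u)H(\nabla u)^p$ with no sign, and the Casten--Holland/Matano route controls such terms only by testing the second variation with $\partial_i u$, i.e.\ by \emph{stability}, which is exactly the hypothesis this theorem dispenses with.

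The rigidity step is also misidentified. Equality in the Newton inequality forces $W=\lambda\,\mathrm{Id}$, whence $\hat a(\nabla v)=\lambda(x-x_0)$ and $v=c_1+c_2\hat H_0(x-x_0)^{\frac{p}{p-1}}$; translated back, $u$ is a Talenti-type bubble $\bigl(a+bH_0(x_0-x)^{\frac{p}{p-1}}\bigr)^{-\frac{n-p}{p}}$ with Wulff-shaped level sets centered at $x_0$ — not the one-dimensional, parallel-hyperplane configuration you describe. The contradiction is then that such a bubble cannot satisfy $a(\nabla u)\cdot\nu=0$ on all of $\partial\Omega$. You are right that the low-regularity/approximation issues (approximating $\Omega$ by smooth convex domains, mollifying $a$, and the $W^{1,2}$ bound on $a(\nabla u)$) constitute much of the technical work, but without the change of variables and the $S^2$ identity the scheme you outline does not produce the inequality that the convexity and the monotonicity of $\Phi$ are meant to squeeze.
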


Theorem \ref{thm_main} extends the results in \cite{CastenHolland} and \cite{Matano} to a general weak solution (not necessarily stable). The condition needed for such generalization is \eqref{Phi_def}. This condition was already used in \cite{Bianchi} to prove, by using the method of moving planes, an analogous non-existence result in $\mathbb{R}^n$ and not in a bounded convex domain. The method of moving planes is not suitable for the problem that we consider. Indeed, the anisotropic setting and the fact that we are dealing with a problem on a bounded domain with Neumann boundary condition is an obstruction to a standard application of the method of moving planes (see for instance \cite{Lin}). 

We also notice that condition \eqref{Phi_def} seems to be optimal, as follows from many results related to the Lin-Ni conjecture \cite{Lin-Ni}. In this conjecture one considers positive solutions to 
\begin{equation} \label{Lin_Ni}
\begin{cases}
\Delta u - \lambda u + u^q = 0 & \textmd{ in }\Omega \\
\partial_\nu u = 0 & \textmd{ on }\partial \Omega
\end{cases}
\end{equation} 
(notice that the constant function $u= \lambda^{\frac{1}{q-1}}$ is a solution to this problem). The conjecture of Lin-Ni is the following: there exists a constant $\lambda_0>0$ such that if $0<\lambda < \lambda_0$ then \eqref{Lin_Ni} admits only the constant solution. This conjecture is true if $1<q<2^*-1$ (see \cite{Lin-Ni-Taka} and \cite{Li-Zhao}). In the critical case $q=2^*-1$, the conjecture is in general false \cite{Wang_TAMS}. If $\Omega$ is a convex domain, the conjecture is still false for $N \geq 4$ as proved in \cite{WangII}, but it is true for $N=3$ (see \cite{Wei,Zhu}). Since Theorem \ref{thm_main1} proves the non-existence of nonconstant solutions for $\lambda =0$, the mentioned examples show that one can not improve condition \eqref{Phi_def} in Theorem \ref{thm_main1}, since counterexamples are available for linear small perturbations of $f(u)=u^{2^*-1}$ (for which \eqref{Phi_def} fails).

The technique that we use does not rely upon maximum principle and it is more in the spirit of the results of \cite{GS}, \cite{SerZou} and \cite{Veron}, where non-existence results for semilinear and quasilinear equations where obtained in $\mathbb{R}^n$ and compact Riemannian manifolds. More precisely, Theorem \ref{thm_main} is a consequence of a general integral inequality which holds for bounded positive weak solutions to \eqref{def_debole}. In particular we have the following proposition.

\begin{proposition} \label{prop_int_ineq}
Let $\Omega$ be a bounded convex domain with boundary of class $C^2$ and let $H$ be as in Theorem \ref{thm_main}. Let $u \in W^{1,p}_{loc}(\Omega)$ be a bounded weak solution to 
\begin{equation} \label{pb_main2}
\begin{cases}
\diver(a(\nabla u))  + f(u) = 0 & \text{ in }\Omega \\
a(\nabla u ) \cdot \nu+h(u) = 0 & \text{ on } \partial \Omega \,
\end{cases}
\end{equation}
where $f,h\in C^1(\R)$. 
 Then 
\begin{multline} \label{integral_ineq}
(n-1) \int_\Omega u^{\frac{n}{n-p}} H^p(\nabla u) \Phi'(u)\, dx \geq   \int_{\partial \Omega} u^{\frac{n}{n-p} - p^* +1} f(u) h(u)\, d\sigma \\ 
+ n  \int_{\partial \Omega} u^{\frac{n}{n-p} - p^* +1} H^p(\nabla u) \left(  h'(u) - \frac{(p-1)(n-1)}{n-p}\frac{h(u)}{u} \right)\, d\sigma \\ 
+ n \int_{\partial \Omega} u^{\frac{n}{n-p} - p^* +1} \mathrm{II}(a^T(\nabla u),a^T(\nabla u))\, d\sigma \, ,
\end{multline}
where $p^\ast=np/(n-p)$ is the Sobolev conjugate of $p$, $\mathrm{II}(\cdot, \cdot)$ denotes the second fundamental form of $\partial\Omega$ and $a^T(\nabla u)$ is the tangential component of $a(\nabla u)$.

Moreover, if the equality in \eqref{integral_ineq} is attained then either $u$ is constant or there exists $a,b > 0$ and $x_0 \in \overline{\Omega}$ such that
\begin{equation} \label{Talenti_p}
u(x) = \left(a + b H_0(x_0-x)^{\frac{p}{p-1}} \right)^{-\frac{n-p}{p}}
\end{equation}
for any $x \in \overline \Omega$.
\end{proposition}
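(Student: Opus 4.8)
The plan is to pass to a ``linearized'' unknown, run an anisotropic Reilly/Bochner‑type integration, and read off \eqref{integral_ineq} together with its equality case. First I would record the regularity needed for the computations below: since $u$ is a bounded positive weak solution, interior and boundary regularity for anisotropic quasilinear equations (in the spirit of Tolksdorf, DiBenedetto, Lieberman and Cianchi--Maz'ya) gives $u\in C^{1,\alpha}_{loc}(\Omega)\cap C^{1,\alpha}(\overline\Omega)$, and the Harnack inequality together with the boundary condition gives $\inf_{\overline\Omega}u>0$, so that negative powers of $u$ are harmless; the critical set $\{\nabla u=0\}$ is closed with empty interior (otherwise $u$ is constant), and the anisotropic stress terms vanish there, so a standard cutoff/approximation argument legitimizes all the integrations by parts. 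Then I would introduce $v:=u^{-p/(n-p)}$ (normalized by a constant if convenient). A direct computation gives $\nabla u=-\tfrac{n-p}{p}\,u^{n/(n-p)}\nabla v$, hence, by the $1$-homogeneity of $H$,
\[
u^{n/(n-p)}\,a(\nabla u)=c_{n,p}\,u^{p^\ast}\,a(\nabla v),\qquad u^{n/(n-p)}\,H^p(\nabla u)=c'_{n,p}\,u^{p^\ast}\,H^p(\nabla v),
\]
which is exactly how the weights in \eqref{integral_ineq} arise. Writing $f(u)=\Phi(u)u^{p^\ast-1}$ and substituting, $v$ solves in $\Omega$ an equation of the form $\diver(a(\nabla v))=\alpha_{n,p}\,H^p(\nabla v)/v+\beta_{n,p}\,v^{1-p}\,\Phi(u)$, with the boundary condition $a(\nabla u)\cdot\nu+h(u)=0$ turning into a Robin‑type condition for $v$ whose data are $h$ and $f$ evaluated at $u=v^{-(n-p)/p}$; when $\Phi$ is constant this is precisely the critical anisotropic $p$-Laplace equation whose solutions are the Talenti bubbles $v=a+b\,H_0(x_0-x)^{p/(p-1)}$.

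\emph{The integral identity.} The core is an anisotropic Reilly/Bochner computation for $v$: differentiate the equation for $v$, contract the result against a suitably weighted multiple of $a(\nabla v)$, sum, and integrate by parts over $\Omega$. Equivalently, one computes the divergence of a vector field built from the anisotropic stress--energy tensor $T:=a(\nabla v)\otimes\nabla v-\tfrac1p H^p(\nabla v)\,\Id$, whose divergence is $\diver(a(\nabla v))\,\nabla v$ by the equation, and from a power of $v$; integrating by parts twice transfers one derivative onto $D^2v$. The resulting bulk integrand contains $\langle\mathcal H_v,\mathcal H_v\rangle$, where $\mathcal H_v$ denotes the anisotropic Hessian of $v$, which I bound from below by its trace part via the elementary inequality $\langle\mathcal H_v,\Id\rangle^2\le n\,\langle\mathcal H_v,\mathcal H_v\rangle$. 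Converting back to $u$ via the previous step, the bulk term becomes exactly $(n-1)\int_\Omega u^{n/(n-p)}H^p(\nabla u)\Phi'(u)\,dx$, the factor $\Phi'(u)$ appearing when a derivative falls on $\Phi(u)$; this is the left‑hand side of \eqref{integral_ineq}.

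\emph{Boundary terms and convexity.} The boundary contributions come from two sources. Wherever $\nabla v\cdot\nu$ appears I substitute the Robin condition for $v$ and collect terms; using $a(\nabla u)\cdot\nu=-h(u)$ and differentiating $h(u)$ along $\partial\Omega$ produces the integrals with $f(u)h(u)$, $h'(u)$ and $h(u)/u$ and the stated coefficients. Integrating the remaining tangential derivatives by parts on the closed hypersurface $\partial\Omega$ produces a term $-\int_{\partial\Omega}(\cdots)\,\mathrm{II}(\nabla^Tv,\nabla^Tv)\,d\sigma$; since on $\partial\Omega$ the tangential part of $a(\nabla v)$ is a positive multiple of $\nabla^Tv$ and, by the substitution, proportional to $a^T(\nabla u)$, this rewrites as the last integral in \eqref{integral_ineq}. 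The convexity of $\Omega$, i.e. $\mathrm{II}\ge0$, is precisely what fixes the sign of this boundary term and turns the identity into the asserted inequality.

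\emph{Equality case and main difficulty.} If equality holds in \eqref{integral_ineq}, then the trace inequality for $\mathcal H_v$ is an equality pointwise, forcing $\mathcal H_v=\tfrac1n(\mathrm{tr}\,\mathcal H_v)\,\Id$ (in the $H$-metric) throughout $\Omega$; combined with $\Phi'(u)H^p(\nabla u)\equiv0$, which is forced by the sign of the left‑hand side, the equation for $v$ reduces to the exact critical one, and integrating this umbilicity relation shows that either $\nabla v\equiv0$, so $u$ is constant, or $v$ is a positive affine function of $H_0(x_0-x)^{p/(p-1)}$ for some $x_0\in\overline\Omega$, which is \eqref{Talenti_p}; this classification is the anisotropic $p$-Laplacian analogue of the Obata--Talenti computation. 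I expect the main obstacles to be (a) justifying all the integrations by parts — control of $\{\nabla u=0\}$, the genuinely anisotropic boundary regularity, and the negative powers of $u$ near $\partial\Omega$ — and (b) pinning down the exact algebra so that the boundary term emerges precisely as $n\int_{\partial\Omega}(\cdots)\,\mathrm{II}(a^T(\nabla u),a^T(\nabla u))$ and the $h$-terms carry the coefficient $(p-1)(n-1)/(n-p)$; the convexity hypothesis is consumed entirely in giving this boundary term the correct sign.
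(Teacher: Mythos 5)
Your proposal follows essentially the same route as the paper: the substitution $v=u^{-p/(n-p)}$, a pointwise divergence-structure identity for the transformed equation, a Newton/Cauchy--Schwarz trace inequality on the matrix $W=\nabla\hat a(\nabla v)$ (the paper's $S^2(W)\le\frac{n-1}{2n}\tr(W)^2$ is equivalent to your $\langle\mathcal H_v,\Id\rangle^2\le n\langle\mathcal H_v,\mathcal H_v\rangle$), boundary terms extracted via a cutoff near $\partial\Omega$ yielding the second fundamental form, and an umbilicity argument $W=\lambda\,\Id$ for the equality case. The only inaccuracy is your claim that equality forces $\Phi'(u)H^p(\nabla u)\equiv0$ --- the proposition makes no sign assumption on $\Phi'$, and this step is neither forced nor needed, since the umbilicity relation alone (handled in the paper with some care on the critical set $\{\nabla v=0\}$) yields \eqref{Talenti_p}.
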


Proposition \ref{prop_int_ineq} is our main tool to prove Theorem \ref{thm_main}. It is clear that, up to an approximation argument for $\Omega$, Theorem \ref{thm_main} directly follows from Proposition \ref{prop_int_ineq} since $h(u) \equiv0$ and the second fundamental form of $\partial \Omega$ is nonnegative definite. 

We notice that Theorem \ref{thm_main} is trivial if $f\geq0$, as it can be easily verified by integrating the equation in \eqref{pb_neum_semilinear} and using the Neumann boundary condition. Hence, the result is of interest when $f$ is negative somewhere. In the case of more general Robin boundary conditions, we can still prove a classification result by exploiting \eqref{integral_ineq} again. In particular, we can consider Robin type boundary conditions under the assumption that  $f(u) \geq 0$ on $\partial \Omega$ and $h$ satisfies
\begin{equation} \label{h_condition}
h(t)\geq 0 \quad \text{and}\quad  h'(t) - \frac{(p-1)(n-1)}{n-p}\frac{h(t)}{t} \geq 0 \quad \forall t >0\,.
\end{equation}

\begin{theorem} \label{thm_main1}
Let $\Omega\subset \mathbb{R}^n$ be a bounded convex domain and let $H$ be as in Theorem \ref{thm_main}. Let $f,h$ be $C^1$ functions satisfying \eqref{Phi_def} and  \eqref{h_condition}, respectively. Then there exist no positive bounded weak solutions $u$ to
\begin{equation} \label{pb_main_robin}
\begin{cases}
\diver(a(\nabla u))  + f(u) = 0 & \text{ in }\Omega \,\\
a(\nabla u ) \cdot \nu + h(u) = 0 & \text{ on } \partial \Omega \,
\end{cases}
\end{equation}
such that $f(u) \geq 0$ on $\partial \Omega $ unless $u$ is constant.
\end{theorem}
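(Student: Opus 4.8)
The plan is to deduce Theorem~\ref{thm_main1} from Proposition~\ref{prop_int_ineq} by a sign analysis of the integral inequality \eqref{integral_ineq}, in the same spirit as the Neumann case of Theorem~\ref{thm_main}; the genuinely new ingredient with respect to that case is the control of the boundary term $\int_{\partial\Omega}u^{\frac{n}{n-p}-p^{*}+1}f(u)h(u)\,d\sigma$, for which the two hypotheses $f(u)\ge 0$ on $\partial\Omega$ and \eqref{h_condition} are tailor-made. First I would reduce to the case $\partial\Omega\in C^{2}$ by approximating $\Omega$ from inside by smooth bounded convex domains $\Omega_{k}\nearrow\Omega$, applying the inequality on each $\Omega_{k}$ and passing to the limit, exactly as for Theorem~\ref{thm_main}; convexity is preserved and the relevant integrals converge.

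Granting this reduction, I apply Proposition~\ref{prop_int_ineq} to a positive bounded weak solution $u$ of \eqref{pb_main_robin} with $f(u)\ge 0$ on $\partial\Omega$. Since $u>0$, hypothesis \eqref{h_condition} gives $h(u)\ge 0$ and $h'(u)-\frac{(p-1)(n-1)}{n-p}\frac{h(u)}{u}\ge 0$ on $\partial\Omega$; together with $f(u)\ge 0$ on $\partial\Omega$, the bounds $u>0$ and $H^{p}(\nabla u)\ge 0$, and the nonnegativity of the second fundamental form of the convex boundary (so that $\mathrm{II}(a^{T}(\nabla u),a^{T}(\nabla u))\ge 0$), this makes each of the three boundary integrals on the right-hand side of \eqref{integral_ineq} nonnegative, hence that right-hand side is $\ge 0$. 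On the other hand \eqref{Phi_def} gives $\Phi'\le 0$, so the left-hand side integrand $u^{\frac{n}{n-p}}H^{p}(\nabla u)\Phi'(u)$ is $\le 0$ almost everywhere and the left-hand side is $\le 0$. Comparing with \eqref{integral_ineq} forces the left-hand side, the right-hand side, and each of the three boundary integrals to vanish; in particular equality is attained in \eqref{integral_ineq}.

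By the equality statement in Proposition~\ref{prop_int_ineq}, either $u$ is constant, in which case we are done, or $u(x)=(a+bH_{0}(x_{0}-x)^{p/(p-1)})^{-(n-p)/p}$ for some $a,b>0$ and $x_{0}\in\overline\Omega$. To exclude the latter I would first observe that, using the $1$-homogeneity of $H_{0}$ and the duality identities $H(\nabla H_{0}(z))=1$ and $\nabla H(\nabla H_{0}(z))=z/H_{0}(z)$, a direct computation shows that for such a profile $a(\nabla u)(x)$ is a strictly positive multiple of $x_{0}-x$ whenever $x\ne x_{0}$, and that $\diver(a(\nabla u))=-c\,u^{p^{*}-1}$ for a positive constant $c$ (the anisotropic Talenti function being a Sobolev extremal), so that $f(u)=c\,u^{p^{*}-1}>0$ on $\overline\Omega$. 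Since the boundary integral $\int_{\partial\Omega}u^{\frac{n}{n-p}-p^{*}+1}f(u)h(u)\,d\sigma$ vanishes while $u>0$ and $f(u)>0$ on $\partial\Omega$, we deduce $h(u)\equiv 0$ on $\partial\Omega$, so the Robin condition reduces to $a(\nabla u)\cdot\nu=0$ on $\partial\Omega$; because $a(\nabla u)$ points along $x_{0}-x$ this in turn forces $(x_{0}-x)\cdot\nu(x)=0$ at every $x\in\partial\Omega$ with $x\ne x_{0}$. Choosing $\bar x\in\partial\Omega$ that maximizes $x\mapsto|x-x_{0}|^{2}$ over the compact set $\overline\Omega$ (so $\bar x\ne x_{0}$, and $\bar x-x_{0}=|\bar x-x_{0}|\,\nu(\bar x)$ by the first-order condition for a maximum over the convex set $\overline\Omega$), we obtain $(x_{0}-\bar x)\cdot\nu(\bar x)=-|\bar x-x_{0}|<0$, a contradiction. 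Hence $u$ must be constant, as claimed.

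The step I expect to be the main obstacle is precisely this exclusion of the Talenti bubble: it relies on promoting the equality in \eqref{integral_ineq} to the pointwise identity $f(u)h(u)\equiv 0$ on $\partial\Omega$, on the strict positivity of the Talenti constant $c$ (so that $f(u)>0$ on $\partial\Omega$ and therefore $h(u)\equiv 0$ there), and on the anisotropic identities that force $a(\nabla u)$ to be a positive multiple of $x_{0}-x$; once these are in place, the geometric contradiction at the point of $\partial\Omega$ farthest from $x_{0}$ is immediate. A more routine, but still necessary, point is to make rigorous the approximation reducing a general bounded convex $\Omega$ to one with $C^{2}$ boundary, in particular the convergence of the boundary integrals in \eqref{integral_ineq}.
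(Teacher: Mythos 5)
Your proposal is correct and follows essentially the same route as the paper: approximate $\Omega$ by smooth convex domains, apply Proposition \ref{prop_int_ineq}, use \eqref{Phi_def}, \eqref{h_condition}, $f(u)\ge 0$ on $\partial\Omega$ and convexity to force equality in \eqref{integral_ineq}, and then rule out the Talenti profile. The only difference is that the paper dismisses the bubble with the one-line remark that $a(\nabla u)\cdot\nu+h(u)$ cannot vanish on all of $\partial\Omega$, whereas you supply the details (vanishing of the $f(u)h(u)$ boundary integral forces $h(u)\equiv 0$, reducing to the Neumann condition, which is contradicted at the boundary point farthest from $x_0$); this is a welcome elaboration, not a deviation.
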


Now we give some comment on the technique used for proving Proposition \ref{prop_int_ineq}. As we already mentioned, our approach is based on integral identities and it is inspired by \cite{GS} and \cite{SerZou}. The main idea consists of considering a suitable vector field involving $u$ and its derivatives and to prove some integral identity and find the final integral inequality \eqref{integral_ineq} by using just one pointwise inequality. 

Since we are concerned with $p$-Laplace type operators, there are several technical difficulties that we have to tackle and, due to the lack of regularity of the solution, we have to argue by approximation. 
In this direction, a crucial assumption is that $u$ is bounded. The boundedness of solutions of semilinear and quasilinear elliptic equations is a very challenging problem. Recently, in \cite{FCRS} it has been shown that stable solutions to \eqref{eq_general} are bounded for $p=2$ up to dimension 9 (see also \cite{Miraglio} for a general $p$). If one does not look for stable solutions, then it has been proved in \cite{Serrin_local} and \cite{Peral} that weak solutions are locally bounded in the critical and subcritical case. Following the approach in  \cite{Serrin_local} and \cite{Peral} we provide a global $L^\infty$ bound on the solutions under the assumption that $f$ is critical or subcritical and, as a consequence, we obtain the following classification result.

\begin{corollary} \label{corollary_critical}
Let $\Omega$, $f$ and $H$ satisfy the assumptions of Theorem \ref{thm_main}. If there exists a constant $C\geq 0$ such that 
\begin{equation} \label{f_critical_growth}
|f(t)| \leq C(1 + t )^{p^*-1}
\end{equation}
for some positive constant $C$, then there exists no positive nonconstant weak solutions to \eqref{pb_main}.

Analogously, if $\Omega$, $f$, $h$ and $H$ satisfy the assumptions of Theorem \ref{thm_main} and $f$ satisfies \eqref{f_critical_growth}, then there exists no positive nonconstant weak solutions to \eqref{pb_main_robin}.
\end{corollary}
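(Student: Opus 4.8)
The plan is to reduce the statement to Theorem~\ref{thm_main} and Theorem~\ref{thm_main1}: the only hypothesis of those theorems not already present here is that $u$ be bounded, so it suffices to show that, under the growth condition \eqref{f_critical_growth}, every positive weak solution $u$ of \eqref{pb_main} (resp.\ of \eqref{pb_main_robin}, in the setting of Theorem~\ref{thm_main1}, i.e.\ with $h$ as in \eqref{h_condition} and $f(u)\ge 0$ on $\partial\Omega$) lies in $L^\infty(\Omega)$. Granting this, Theorem~\ref{thm_main} (resp.\ Theorem~\ref{thm_main1}) forces $u$ to be constant, which is exactly the assertion. To get the $L^\infty$ bound I would run the Moser-type iteration for quasilinear equations with subcritical or critical growth of Serrin \cite{Serrin_local} and Peral \cite{Peral} --- a Brezis--Kato scheme in the critical exponent case --- and check that the boundary term does not obstruct it.

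Concretely, the interior bound $u\in L^\infty_{loc}(\Omega)$ is the local boundedness theorem of \cite{Serrin_local,Peral}, whose proof uses only $u\in W^{1,p}_{loc}(\Omega)$, the ellipticity \eqref{eq:elliptic H} of $a$ (which makes $a(\nabla u)\cdot\nabla u$ comparable to $|\nabla u|^p$), the Sobolev embedding and \eqref{f_critical_growth}. For the global bound I would repeat the same iteration with cut-off functions $\eta\in C^1_c(\R^n)$ that reach $\partial\Omega$. Here I would first note that it is implicit in the notion of weak solution that the boundary integral in \eqref{def_debole} is meaningful --- in particular $u$ has a trace on $\partial\Omega$ --- and then, $\partial\Omega$ being Lipschitz, upgrade $u$ to $W^{1,p}(\Omega)$, hence to $L^{p^*}(\Omega)$, by a preliminary Caccioppoli-type estimate. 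Next I would test \eqref{def_debole} with $\phi=\eta^p\,G_\beta(u)$, $G_\beta$ a Lipschitz truncation of $t\mapsto t^{1+\beta}$, use \eqref{eq:elliptic H} together with Young's inequality to absorb the lower-order terms from $\nabla(\eta^p G_\beta(u))$, and apply the Sobolev inequality; this produces a reverse-H\"older inequality controlling a higher $L^{p^*}$-norm of a power of $u$ by lower ones, plus the term coming from $f$ and the boundary term $-\int_{\partial\Omega}h(u)\,\eta^p G_\beta(u)\,d\sigma$.

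The term coming from $f$ is the one requiring care in the critical case. I would split $\Omega$ into $\{u\le M\}$ and $\{u>M\}$: on $\{u\le M\}$ the bound \eqref{f_critical_growth} gives control by a subcritical power of $u$, while on $\{u>M\}$ one has $(1+u)^{p^*-1}\le 2^{p^*-1}u^{p^*-1}$, and H\"older's inequality with the conjugate exponents $p^*/p$ and $p^*/(p^*-p)$ bounds that contribution by $C\,\|u\|_{L^{p^*}(\{u>M\})}^{\,p^*-p}$ times the leading $L^{p^*}$-norm on the left-hand side. Since $u\in L^{p^*}(\Omega)$, absolute continuity of the integral makes $\|u\|_{L^{p^*}(\{u>M\})}$ arbitrarily small for $M$ large, so this contribution is absorbed. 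Iterating over a sequence of exponents $\beta_j\uparrow\infty$ as in \cite{Peral} would give $u\in L^q(\Omega)$ for all finite $q$ and, finally, $u\in L^\infty(\Omega)$.

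Finally, the boundary term never obstructs the iteration: in the Neumann case $h\equiv0$ and it is absent, while in the Robin case \eqref{h_condition} gives $h\ge0$ and $u>0$ makes $\eta^p G_\beta(u)\ge0$, so $-\int_{\partial\Omega}h(u)\,\eta^p G_\beta(u)\,d\sigma\le0$ and can simply be dropped in each estimate. Hence $u\in L^\infty(\Omega)$ in both cases and the reduction is complete. The hard part is purely technical: making the critical-exponent iteration work uniformly up to the (Lipschitz) boundary of $\Omega$ and justifying the admissibility of the truncated test functions there. The point that makes the boundary step cost nothing --- no growth restriction on $h$ is needed --- is exactly the favourable sign of the Robin contribution, which comes from \eqref{h_condition} and the positivity of $u$.
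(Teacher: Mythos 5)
Your proposal is correct and follows essentially the same route as the paper: the paper packages the required global $L^\infty$ bound as Lemma \ref{Lemma_Cianchi_prel}, proved by exactly the Serrin--Peral Moser iteration you describe (with the Robin boundary term dropped thanks to $h\ge 0$ and the nonnegativity of the test function), and then invokes Theorems \ref{thm_main} and \ref{thm_main1}. The only cosmetic difference is that the paper absorbs the critical term by localizing on small balls where $\|u\|_{L^{p^*}}$ is small, rather than by splitting over the level sets $\{u>M\}$, but these are equivalent uses of the absolute continuity of the integral.
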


We conclude this introduction with a remark on a non-existence result which is related to overdetermined problems where one assumes $|\nabla u|=0$ on $\partial \Omega$.\footnote{This kind of overdetermined condition comes from the well-known Schiffer's conjecture, which asserts that the ball is the only bounded domain such that a Neumann eigenfunction of the Laplacian is constant at the boundary.} 

\begin{theorem} \label{thm_schiffer}
Let $\Omega\subset \mathbb{R}^n$ be a bounded convex domain, let $H$ be as in Theorem \ref{thm_main1}, and let $f \in C^1(\mathbb R)$ be such that \eqref{Phi_def} is satisfied. Then there exists no positive bounded weak solutions to
\begin{equation}\label{pb_schiffer}
\begin{cases}
\diver(a(\nabla u))  + f(u) = 0 & \text{ in }\Omega \,\\
|\nabla u| = 0 & \text{ on } \partial \Omega \,,
\end{cases}
\end{equation}
unless $u$ is constant.
\end{theorem}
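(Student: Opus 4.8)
The plan is to reduce Theorem~\ref{thm_schiffer} to the Neumann case of Theorem~\ref{thm_main}. The key point is that the overdetermined condition $|\nabla u|=0$ on $\partial\Omega$ implies, a fortiori, the Neumann condition $a(\nabla u)\cdot\nu=0$ on $\partial\Omega$. Indeed, by the structural form \eqref{a_def} one has $|a(\xi)|=H(\xi)^{p-1}|\nabla H(\xi)|\le C|\xi|^{p-1}$ for all $\xi\in\R^n$, since $\nabla H$ is positively $0$-homogeneous, hence bounded, and $H$ is comparable to the Euclidean norm; therefore $|\nabla u|=0$ on $\partial\Omega$ forces $a(\nabla u)=0$ identically on $\partial\Omega$, so that in particular $a(\nabla u)\cdot\nu=0$ and the tangential component $a^T(\nabla u)=0$ there. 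Thus every positive bounded weak solution to \eqref{pb_schiffer} is a positive bounded weak solution to the Neumann problem \eqref{pb_main}, and Theorem~\ref{thm_main} gives that $u$ is constant.

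The only point that requires care is the precise sense in which the equation is satisfied up to $\partial\Omega$: if ``weak solution to \eqref{pb_schiffer}'' means that \eqref{def_debole} holds (with $h\equiv0$) only for $\phi\in C^1_c(\Omega)$, then one first has to upgrade this to all $\phi\in C^1_c(\R^n)$. This I would do by interior regularity --- $u\in C^{1,\alpha}_{loc}(\Omega)$ by the De Giorgi--Nash--Moser and DiBenedetto--Tolksdorf theory for $p$-Laplace type equations --- together with the regularity of $\nabla u$ up to $\partial\Omega$ that is implicit in the requirement $|\nabla u|=0$ on $\partial\Omega$: then $a(\nabla u)$ extends continuously to $\overline\Omega$ and vanishes on $\partial\Omega$, so that integrating the equation against $\phi$ on $\Omega_\e:=\{x\in\Omega:\dist(x,\partial\Omega)>\e\}$ and letting $\e\to0$ produces the boundary term $\int_{\partial\Omega_\e}(a(\nabla u)\cdot\nu_\e)\,\phi\,d\sigma\to\int_{\partial\Omega}(a(\nabla u)\cdot\nu)\,\phi\,d\sigma=0$, which yields \eqref{def_debole} for every $\phi\in C^1_c(\R^n)$. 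I expect this passage from the interior to the global weak formulation --- where one must control the boundary behaviour of $\nabla u$ and also handle the approximation of the convex domain by smooth convex domains already used in the proof of Theorem~\ref{thm_main} --- to be the only genuine obstacle.

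As an alternative one can avoid quoting Theorem~\ref{thm_main} and argue directly from Proposition~\ref{prop_int_ineq}, applied with $h\equiv0$: in \eqref{integral_ineq} the first two boundary integrals vanish because $h\equiv0$ and the third vanishes because $a^T(\nabla u)=0$ on $\partial\Omega$ by the first step, so the right-hand side is $0$ while the left-hand side is $\le0$ by \eqref{Phi_def}; hence equality holds in \eqref{integral_ineq}. The rigidity alternative \eqref{Talenti_p} is then incompatible with $|\nabla u|=0$ on $\partial\Omega$, because the gradient of the function in \eqref{Talenti_p} vanishes only at the single point $x_0$ and so cannot vanish on the whole hypersurface $\partial\Omega$; therefore $u$ must be constant.
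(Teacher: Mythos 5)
Your proposal is correct and takes essentially the same route as the paper: the authors observe that $|\nabla u|=0$ on $\partial\Omega$ forces $a(\nabla u)\cdot\nu=0$ and $a^T(\nabla u)=0$ there, so all boundary terms in \eqref{integral_ineq} vanish, equality holds by \eqref{Phi_def}, and the rigidity alternative \eqref{Talenti_p} is excluded exactly as you argue (its gradient vanishes only at $x_0$). Your worry about the test-function class is moot here, since the paper's Definition of weak solution already demands \eqref{def_debole} for all $\phi\in C^1_c(\mathbb{R}^n)$, so the Neumann condition is built into the weak formulation from the start.
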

We notice that the condition $|\nabla u|=0$ on $\partial \Omega$ implies that $u$ is constant and $a(\nabla u) \cdot \nu =0$ on $\partial \Omega$ and hence Theorem \ref{thm_schiffer} easily follows from Proposition \ref{prop_int_ineq}. 

\medskip

The paper is organized as follows. In Section \ref{section_regularity} we introduce some notation and provide a global $L^\infty$  bound on the solution under the assumption that $f$ is critical or subcritical (this result is needed only to prove Corollary \ref{corollary_critical}) and we prove a higher order integrability result for bounded weak solutions $u$ to \eqref{pb_main_robin}. In Section \ref{section_prop} we prove Proposition \ref{prop_int_ineq}. In Section \ref{section_thm} we prove the main theorems.

\subsection*{Acknowledgments} The authors thank Louis Dupaigne and Alberto Farina for useful discussions and remarks.

The authors have been partially supported by the ``Gruppo Nazionale per l'Analisi Matematica, la Probabilit\`a e le loro Applicazioni'' (GNAMPA) of the ``Istituto Nazionale di Alta Matematica'' (INdAM, Italy). R.C. has been partially supported by the PRIN 2017 project ``Qualitative and quantitative aspects of nonlinear PDEs''. A.R. has been partially supported by the PRIN 2015 project ``Partial differential equations and related analytic geometric inequalities'' and by the PRIN 2017 project ``Direct and inverse problems for partial differential equations: theoretical aspects and applications''.

\section{Notation and preliminary results} \label{section_regularity}
In this section we introduce some notation and give two preliminary results.

\subsection{Notation} \label{subsect_notation}
Let $\Omega \subset \mathbb{R}^n$ be a bounded domain. We denote by $B_r(x)$ the usual Euclidean ball centered at $x$ and of radius $r$.

We consider $\mathbb{R}^n$ endowed with a ``norm'' or gauge $H:\mathbb{R}^n\rightarrow\mathbb{R}$ such that 
\begin{itemize}
\item $H$ is convex;
\item $H$ is positively one-homogeneous, i.e. $H(\lambda\xi)=\lambda H(\xi)$ for all $\lambda>0$ and $\xi\in\mathbb{R}^n$;
\item $H(\xi)>0$ for all $\xi\in\mathbb{S}^{n-1}$.
\end{itemize}
Observe that we do not require $H$ to be symmetric, so it may happen that $H(\xi) \neq H(-\xi)$. In this paper we assume that $H^2$ is of class $C^{2}(\R^n\setminus \{\mathcal O\})$ and it is uniformly convex and $C^{1,1}$ in $\R^n$ (hence \eqref{eq:elliptic H} holds).

An important property of $H$, which follows from the homogeneity, is the following 
$$
\nabla H(\xi)\cdot\xi=H(\xi)\quad \text{ for all $\xi\in\mathbb{R}^n$}\, .
$$
In particular we have the following relation, which we will use several times, 
$$
a(\nabla u)\cdot\nabla u=H^{p}(\nabla u)\, .
$$
Moreover, we denote by $H_0$ the dual norm associated to $H$:
$$
H_0(x)=\sup_{H(\xi)=1}\xi\cdot x\quad \text{ for all $x\in\mathbb{R}^n$}\, .
$$

\subsection{Boundedness of solutions in the critical and subcritical case} 
We start by proving $L^\infty$ bounds for solutions to \eqref{pb_main_robin}. Even if this result is needed only in Corollary \ref{corollary_critical}, we prefer to start from this result in order to introduce some notation and approximation argument which will be needed in the rest of the paper.  

Bounds on the $L^\infty$ norm of solutions \eqref{pb_main_robin} follows from the growth assumption \eqref{f_critical_growth} by following \cite{Serrin_local} and \cite{Peral} (see also \cite[Lemma 2.1]{CFR} and \cite{Harrabi}). Since we aim at giving global bounds and have to deal with a Robin type boundary condition, we give a proof for the sake of completeness.

\begin{lemma}\label{Lemma_Cianchi_prel}
	Let $\Omega\subseteq \mathbb{R}^n$ be a  bounded domain, $f,h:(0,\infty)\to \R$ such that 
	\begin{equation}\label{f_bound}
	|f(t)|\leq C(1+t)^{p^*-1}
	\end{equation}
	for some $C>0$ and every $t>0$,  
	$h\geq 0$ and let $u\in W^{1,p}_{loc}(\Omega)$ 
	be a positive solution to 
	\begin{equation}\label{approx_ter}
	\begin{cases}
	\diver(a(\nabla u)) + f(u) = 0 & \text{ in } \Omega \\
	a(\nabla u)\cdot\nu+h(u)=0 & \text{ on } \partial\Omega\, ,
	\end{cases}
	\end{equation}
	where the $a:\mathbb{R}^n\rightarrow\mathbb{R}^n$ is a continuous vector field such that the following holds: 
	there exist $\beta>0$ and $0\leq s\leq 1/2$ such that
	\begin{equation}\label{ellipticity_approx}
	|a(\xi)|\leq \beta(|\xi|^2+s^2)^{\frac{p-1}{2}} \quad \text{and} \quad \xi\cdot a(\xi)\geq \dfrac{1}{\beta}\int_0^1\left( t^2|\xi|^2+s^2\right)^{\frac{p-2}{2}}|\xi|^2\, dt  \, , 
	\end{equation}
	for every $\xi\in\mathbb{R}^n$. Then $u\in L^{\infty}(\Omega)$ and 
	\begin{equation*}
	\|u\|_{\infty}\leq K (\|u\|_{p}+s). 
	\end{equation*}	 
	where $K$ depends only on $n$, $p$, $\beta$ and on the Sobolev constant of $\Omega$. 
	\end{lemma}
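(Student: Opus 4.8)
The plan is to run a Moser-type iteration on suitable powers of $u$, combined with the Brezis--Kato truncation device that makes the critical growth \eqref{f_bound} manageable. Throughout, $c$ and $C$ denote positive constants depending only on $n$, $p$, $\beta$ and the Sobolev constant $S_\Omega$ of $\Omega$ (and possibly on an iteration exponent, as indicated), which may change from line to line. As a preliminary step one checks that $u\in W^{1,p}(\Omega)$ — hence $u\in L^{p^*}(\Omega)$ — by a Caccioppoli-type estimate: test the weak formulation of \eqref{approx_ter} with the truncation $T_M(u):=\min\{u,M\}$, discard the boundary integral (it is $\le 0$, since $h\ge0$ and $T_M(u)\ge0$), use the lower bound in \eqref{ellipticity_approx} — which, after estimating the inner integral, gives $\xi\cdot a(\xi)\ge c\,|\xi|^p$ — together with the Sobolev inequality, and let $M\to\infty$.

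For the iteration step, fix $\gamma\ge1$, set $u_L:=\min\{u,L\}$, and test with the nonnegative function $\phi:=u\,u_L^{p(\gamma-1)}$. Since $\phi$ is a function of $u$ alone no cross terms appear; the lower bound in \eqref{ellipticity_approx} turns the left-hand side into $\int_\Omega\phi'(u)\,a(\nabla u)\cdot\nabla u\,dx\ge c\,\gamma^{1-p}\int_\Omega\bigl|\nabla(u\,u_L^{\gamma-1})\bigr|^p\,dx$, while the boundary term has the favourable sign and is dropped. Bounding the right-hand side by means of \eqref{f_bound} as $\le C\int_\Omega(1+u)^{p^*-p}\bigl(1+(u\,u_L^{\gamma-1})^p\bigr)\,dx$, splitting $\Omega=\{u\le T\}\cup\{u>T\}$, and applying H\"older with exponents $\tfrac{p^*}{p^*-p}$, $\tfrac{p^*}{p}$ on $\{u>T\}$, one isolates the factor $\|1+u\|_{L^{p^*}(\{u>T\})}^{p^*-p}$, which — using $u\in L^{p^*}(\Omega)$ — can be made $\le\tfrac{1}{2}\,c\,\gamma^{1-p}/S_\Omega$ by choosing $T=T_\gamma$ large enough. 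Absorbing that term via the Sobolev inequality gives
\[
\bigl\|u\,u_L^{\gamma-1}\bigr\|_{L^{p^*}(\Omega)}^{p}\le C_\gamma\bigl(\|u\|_{L^{p\gamma}(\Omega)}^{p\gamma}+1\bigr),
\]
and letting $L\to\infty$ by monotone convergence (legitimate once the right-hand side is known to be finite) yields $u\in L^{p^*\gamma}(\Omega)$ together with a quantitative bound.

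Iterating with $\gamma_k=(p^*/p)^k$, $\gamma_0=1$, and using that $C_{\gamma_k}$ grows only polynomially in $\gamma_k$ — so that $\sum_k\gamma_k^{-1}\log C_{\gamma_k}<\infty$ — one obtains $u\in L^\infty(\Omega)$. A careful bookkeeping of the constants, including the contribution of the parameter $s$ (which enters through the upper bound in \eqref{ellipticity_approx} when one has to localise near $\partial\Omega$ for a general bounded domain), then gives a bound of the stated form $\|u\|_{L^\infty(\Omega)}\le K(\|u\|_{L^p(\Omega)}+s)$.

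The main obstacle is not the iteration but its rigorous justification: since $u$ is only assumed to lie in $W^{1,p}_{loc}(\Omega)$, the function $u\,u_L^{p(\gamma-1)}$ is not an admissible test function in the weak formulation \eqref{def_debole} (valid for $\phi\in C_c^1(\mathbb R^n)$). I would handle this by replacing $a$ by a smooth, uniformly elliptic regularisation — this is precisely the role of the parameter $s>0$ and of the two-sided bounds \eqref{ellipticity_approx}, which are stable under such regularisation — so that $u$ arises as a limit of classical solutions of auxiliary problems for which all the integrations by parts above are licit; the fact that the test functions in \eqref{def_debole} need not vanish on $\partial\Omega$ allows the boundary integral to be retained without cutoffs, and its sign makes it harmless. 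One then passes to the limit in the a priori estimates using the uniform bounds just obtained, local $W^{1,p}$ convergence, Fatou's lemma / monotone convergence, and continuity of the trace operator. A secondary point, again where the precise form of \eqref{ellipticity_approx} is used, is the lower estimate $\int_0^1(t^2|\xi|^2+s^2)^{(p-2)/2}|\xi|^2\,dt\ge c_p|\xi|^p$, to be checked separately in the ranges $1<p<2$ and $p\ge2$.
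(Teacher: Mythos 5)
Your overall strategy (Moser iteration on truncated powers of $u$, Brezis--Kato absorption of the critical term using the smallness of $\|u\|_{L^{p^*}}$ on $\{u>T\}$, and discarding the boundary integral thanks to $h\ge 0$ and the sign of the test function) is the same as the paper's, which runs the iteration with the truncated powers $F,G$ and cutoffs $\eta$, absorbs the critical term on small balls, and likewise drops the signed boundary term. However, there is a genuine error at the foundation of your argument: the coercivity estimate
\[
\int_0^1\bigl(t^2|\xi|^2+s^2\bigr)^{\frac{p-2}{2}}|\xi|^2\,dt\;\ge\;c_p\,|\xi|^p
\]
is \emph{false} for $1<p<2$. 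Indeed, for $|\xi|\le s$ the integrand is at most $s^{p-2}|\xi|^2=|\xi|^p(|\xi|/s)^{2-p}$, which is $o(|\xi|^p)$ as $|\xi|\to 0$ with $s$ fixed (e.g.\ $p=3/2$, $s=1/2$, $|\xi|=\epsilon$ gives $\approx \epsilon^2\ll\epsilon^{3/2}$). The correct statement, which the paper proves, is $\xi\cdot a(\xi)\ge \beta_*\bigl(|\xi|^p-s^p\bigr)$, i.e.\ there is an unavoidable defect $-s^p$ on the degenerate set $\{|\xi|<s\}$. Since your preliminary Caccioppoli estimate and your main iteration inequality both invoke the false bound $\xi\cdot a(\xi)\ge c|\xi|^p$, they fail as written. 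The missing idea is precisely how the paper handles this defect: the substitution $\tilde u=u+s$, which converts $-s^p$ into $-\tilde u^p$ and hence into a harmless lower-order term $\int\eta^pu^pG'(u)\,dx$ absorbed into the iteration; this substitution is also the true source of the ``$+s$'' in the conclusion $\|u\|_\infty\le K(\|u\|_p+s)$, which you instead attribute (incorrectly) to the upper bound in the ellipticity assumption and to localisation near $\partial\Omega$.

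Two further remarks. First, your detour through a regularisation of the operator $a$ to justify the test functions is unnecessary and problematic: the given $u$ is a solution of the \emph{original} problem and does not a priori arise as a limit of solutions of regularised problems; the standard (and the paper's) device is to truncate the test function itself so that it is Lipschitz in $u$ and bounded, and then use density in the weak formulation. Second, your claim that the constants $C_\gamma$ ``grow only polynomially in $\gamma$'' so that the iteration closes with $K$ depending only on $n,p,\beta$ and the Sobolev constant glosses over the fact that your absorption threshold $T_\gamma$ depends on $u$ through the modulus of $p^*$-integrability; this quantitative subtlety should at least be acknowledged when the nonlinearity has exactly critical growth.
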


\begin{proof}
	%
	%
	%
	We give a sketch of this proof by following the one of \cite[Theorem 1]{Serrin_local} (see also \cite[Theorem E.0.20]{Peral} and  \cite[Lemma 2.1]{CFR}). We first notice that 
	\begin{equation}\label{eq:claim}
	\xi\cdot a(\xi)\geq \beta_* \left(|\xi|^p - s^p\right) \,.
	\end{equation}
	where 
	$$
	\beta_* = \frac{1}{\beta} \min \left( \dfrac{1}{(p-1)}, 1\right) \,.
	$$
	Indeed, if $p\geq 2$, then \eqref{ellipticity_approx} implies
	$$
	\xi\cdot a(\xi)\geq \dfrac{1}{\beta (p-1)}|\xi|^p\,.
	$$ 
	Equation \eqref{eq:claim} is also true if $1<p<2$ 
	and $s > |\xi|$, since the right-hand side of \eqref{eq:claim} is negative. It remains to prove \eqref{eq:claim} when $1<p<2$ and $s \leq |\xi|$. In this case
	$$
	t^2|\xi|^2+s^2 \leq 2|\xi|^2 \qquad \text{ for $t \in [0,1]$}, 
	$$
	and hence
	$$
	\int_0^1\left( t^2|\xi|^2+s^2\right)^{\frac{p-2}{2}}|\xi|^2\, dt \geq   |\xi|^p,
	$$
	that again implies \eqref{eq:claim}. 
	
	Let $\tilde{u}=u+s$, and we obtain that $\tilde u$ satisfies 
	\begin{equation} \label{starting}
	|a(\nabla \tilde u)|\leq \beta_*(|\nabla \tilde u|^2+\tilde{u}^2)^{\frac{p-1}{2}} \quad \text{and} \quad \nabla \tilde u\cdot a(\nabla \tilde u)\geq \dfrac{1}{2\beta_*}\left(|\nabla \tilde u|^p - \tilde{u}^p\right), 
	\end{equation}
	for every $\xi\in\mathbb{R}^n$, which are our starting point. In order to avoid heavy notation, we write $u$ instead of $\tilde u$. 
	
\medskip
	
\noindent \emph{Step 1: $u\in L^{q p}_{loc}(\Omega)$ (with $q \geq 1$) implies $u\in L^{q p^\ast}_{loc}(\Omega)$.} Given $l>0$ and $1\leq q$, we define
	\begin{equation}\label{F_Peral}
	F(u)=\begin{cases}
	u^{q}\ & \text{ if } u\leq l \\
	q l^{q-1}(u-l)+l^q & \text{ if } u>l  \, ,
	\end{cases}
	\end{equation}
	and 
	$$
	G(u)=\begin{cases}
	u^{(q-1)p+1}\ & \text{ if } u\leq l \\
	((q-1)p+1)l^{(q-1)p}(u-l)+ l^{(q-1)p+1} & \text{ if } u>l  \,.
	\end{cases}
	$$
	Let 
	$$
	\xi=\eta^p G(u)
	$$
	where $\eta\in C^\infty_c(\mathbb{R}^n)$ and $\eta\geq 0$. From \eqref{approx_ter} with $\xi$ used as test-function, we obtain
	\begin{equation}\label{debole}
	\int_{\partial \Omega} h(u)\eta^p G(u) d\sigma +\int_{\Omega} {a}(\nabla u)\cdot\nabla (\eta^p G(u))\, dx=\int_{\Omega}f(u)\eta^p G(u)\, dx\, .
	\end{equation}
	From \eqref{debole}, \eqref{eq:claim} and by the fact that $h,G\geq 0$ we get 
	\begin{equation*}
	\begin{aligned}
	c_1\int_{\Omega} \eta^p G'(u)|\nabla u|^p\, dx \leq  \int_{\Omega}\eta^{p-1}G(u)|a(\nabla u)\cdot\nabla\eta|\, dx+\int_{\Omega}f(u)\eta^p G(u)\, dx 
	+\int_\Omega u^p\eta^p G'(u)\, dx \, ,
	\end{aligned}
	\end{equation*}
	for some $c_1>0$. We estimate the second term by using Young's inequality and  \eqref{ellipticity_approx}, and we obtain
	\begin{equation*}
	\begin{aligned}
	\eta^{p-1} |a(\nabla u)\cdot\nabla\eta|&\leq
	\epsilon^{\frac{p}{p-1}}u^{-1}|a(\nabla u)|^{\frac{p}{p-1}}\eta^p+\epsilon^{-p}u^{p-1}|\nabla\eta|^p\\
	&\leq C_1\epsilon^{\frac{p}{p-1}}u^{-1}(|\nabla u|^p+u^p)\eta^p+\epsilon^{-p}u^{p-1}|\nabla\eta|^p,
	\end{aligned}
	\end{equation*}
	for any $\epsilon\in (0,1)$, where $C_1$ depends only on $\beta$ and $p$.
	From \eqref{f_bound}, since $G(u)\leq uG'(u)$ and $G$ is convex, we obtain 
	\begin{equation*}
	\begin{aligned}
	c_1\int_{\Omega} \eta^p G'(u)|\nabla u|^p\, dx &\leq C_1\epsilon^{\frac{p}{p-1}} \int_{\Omega} \eta^p G'(u)|\nabla u|^p\, dx
	+   C_2 \int_{\Omega} u^p \eta^p G'(u)\, dx \\
	&+\epsilon^{-p}\int_{\Omega}G(u)u^{p-1}|\nabla\eta|^p\, dx+ \int_{\Omega}f(u)\eta^p G(u)\, dx\, ,
	\end{aligned}
	\end{equation*}
	 for any $\epsilon\in (0,1)$ and for some constant $C_2$ which depends only on $\beta$ and $p$. We choose $\epsilon$ small enough and obtain
	$$
	c_2\int_{\Omega} \eta^p G'(u)|\nabla u|^p\, dx \leq \int_{\Omega}  \eta^p u^p G'(u)\, dx \\
	+\int_{\Omega}G(u)u^{p-1}|\nabla\eta|^p\, dx+ \int_{\Omega}f(u)\eta^p G(u)\, dx\, ,
	$$
	where $c_2>0$ depends only on $n$, $p$ and  $\beta$.
	Since 
	$$
	G'(u)\geq c[F']^p
	$$ 
	and
	$$
	u^{p-1}G(u)\leq C[F(u)]^p \,,
	$$ 
	we obtain  
	\begin{equation*}
	c_3\int_{\Omega}|\nabla (\eta F(u))|^p\, dx \\ \leq \int_{\Omega}\eta^pu^p G'(u)\, dx+\int_{\Omega}|\nabla\eta|^pF^p(u)\, dx+ \int_{\Omega}\eta^pf(u)u^{1-p} F^p(u)\, dx\, ,
	\end{equation*}
	where $c_3$ depends only on $n$, $p$ and  $\beta$.
	Hence, thanks to the classical Sobolev inequality in bounded domains \cite[Lemma 5.10]{Adams}
	$$
	C_* \|u\|_{p^*} \leq \|u\|_p + \|\nabla u\|_p \,,
	$$ 
	and thanks to \eqref{f_bound}, we find
	\begin{equation}\label{post_sobolev}
	c_4\left(\int_{\Omega}F^{p^\ast}(u)\eta^{p^\ast}\, dx\right)^{\frac{p}{p^\ast}}  \leq \int_{\Omega}\eta^pu^pG'(u)\, dx+\int_{\Omega}(\eta^p+|\nabla\eta|^p)F^p(u)\, dx + \int_{\Omega}\eta^pu^{p^{\ast}-p} F^p(u)\, dx \, ,
	\end{equation}
	where $c_4>0$ depends only on $n$, $p$, $\beta$ and the Sobolev constant for $\Omega$.
	
	Let $x_0\in \R^n$ and $\rho>0$ be such that 
	$$
	||u||^{p^\ast-p}_{L^{p^\ast}(B_\rho(x_0) \cap \Omega)}\leq \frac{c_4}2.
	$$
	Let $R < \rho$ and let $\eta$ be such that ${\rm supp}(\eta)\subset B_R(x_0)$.  From Holder's inequality applied to the last term in \eqref{post_sobolev},  we obtain
	\begin{equation}
	\frac{c_4}2\left(\int_{\Omega}F^{p^\ast}(u)\eta^{p^\ast}\, dx\right)^{\frac{p}{p^\ast}}\leq \int_{\Omega}\eta^pu^{p}G'(u)\, dx+\int_{\Omega}(\eta^p+|\nabla\eta|^p)F^p(u)\, dx\, .
	\end{equation}
	By taking the limit as $l\rightarrow\infty$, from the definition of $F$ and $G$ and since $\eta \geq 0$, by monotone convergence we conclude that 
	
	\begin{equation}
	\begin{aligned}
	\frac{c_4}2\left(\int_{\Omega}\eta^{p^\ast}u^{q p^\ast}\, dx\right)^{\frac{p}{p^\ast}}\leq
	\int_{\Omega}\eta^pu^{p}u^{(q-1)p}\, dx + \int_{\Omega}(\eta^p+|\nabla\eta|^p) u^{q p}\, dx  
	\leq 
	2\int_{\Omega}(\eta^p+|\nabla \eta|^p)u^{qp}\, dx.
	\end{aligned}
	\end{equation}
	Hence, if $\rho>R>R'>0$ and we take $\eta\in C_c^\infty(B_R(x_0))$, $0\leq \eta \leq 1$, $\eta=1$ in $B_{R'}(x_0)$, $|\nabla\eta|\leq \frac{1}{R-R'}$, then we have
	%
	
	\begin{equation}
	\label{prima_moser}
	\|u\|_{qp^*, R'}\leq c^\frac{1}{q} \left (1+\frac{1}{R-R'} \right)^\frac{1}{q} \|u\|_{qp,R},
	\end{equation}
	where here and in the following we set 
	$$
	\|u\|_{\alpha, r}:=\left (\int_{\Omega\cap B_r(x_0)} u^\alpha dx\right)^{\frac 1 \alpha}
	$$ 
	and where $c>0$  depends only on $n$, $p$, $\beta$ and the Sobolev constant for $\Omega$. This completes the proof of Step 1.

\medskip	
	
\noindent \emph{Step 2: Moser iteration.} We define $R_j=r(1+2^{-j})$ with $0<r<\rho/2$ and  $j=0,1,2,\dots$ and $q_j=\left (\frac{p^*}{p}\right )^j$ (note that $q_j\geq 1$). We use \eqref{prima_moser} with $R=R_j$, $R'=R_{j+1}$ and $q=q_j$ to obtain 
	
	\begin{equation*}
	\|u\|_{q_jp, R_{j+1}}\leq c^\frac{1}{q_j} \left (1+2^{j+1}r^{-1} \right)^\frac{1}{q_j} \|u\|_{q_jp,R_j}
	\end{equation*}
	which implies that

	\begin{equation*}
	\|u\|_{q_jp, R_{j+1}}\leq c^{\sum_{k=0}^j \frac{1}{q_k}} \Pi_{k=0}^j \left (1+2^{k+1}r^{-1} \right)^\frac{1}{q_k}  \|u\|_{q'p,2r}
	\end{equation*}
	By taking the limit for $j\to \infty$ and observing that $\|u\|_{\infty, r}\leq \lim_{j\to \infty} \|u\|_{q_jp,R_j}$, we finally have 
	
	\begin{equation}\label{u_infty}
	\|u\|_{\infty, r}\leq K' \|u\|_{p,2r},
	\end{equation}
	with some $K'>0$ depending only on $n$, $\beta$, $p$ and the Sobolev constant for $\Omega$; hence $
	\|u\|_{\infty}\leq K' \|u\|_{p}$. 
	We recall that \eqref{u_infty} holds for $\tilde u$ in place of $u$, where $\tilde u=u+s$. Coming back to the old variable $u$, we immediately have 
	\begin{equation*}
	\|u\|_{\infty}\leq K (\|u\|_{p}+s) \,,
	\end{equation*}	
	for some $K>0$, which completes the proof.
\end{proof}

\subsection{Higher integrability result}
In this subsection we prove a higher integrability result for $a(\nabla u)$. It is well-known that solutions to $p$-Laplace type equation are only $C^{1,\alpha}$ regular (\cite{DiBenedetto} and \cite{Tolksdorf}), and one may expect higher regularity results if one consider the vector field $a(\nabla u)$. Since we work by approximation, we need to find uniform estimates for the operators which approximate $\diver(a(\nabla u))$. In particular, following \cite{AKM}, we will consider approximating operators of the form $\diver (a^s(\nabla u))$, where $a^s$ satisfies the following conditions: $a^s:\R^n \to \R^n$, $a^s\in C^1(\R^n)$ for $p\geq 2$ and $a^s\in C^1(\R^n\backslash\{\mathcal O\})$ for $p< 2$, and for any $\xi,z\in \R^n$
\begin{equation}
\label{hp_Mingione}
\begin{cases}
|a^s(\xi)|+|\partial a^s(\xi)|(|\xi|^2+s^2)\leq L(|\xi|^2+s^2)^{(p-1)/2}\\
\mu (|\xi|^2+s^2)^{(p-1)/2}|z|^2\leq \partial a^s(\xi)z  \cdot z,
\end{cases}
\end{equation}
with $0<\mu \leq L$ and $s\geq 0$. If $p<2$ we also assume that $a^s$ is symmetric ($\partial_i a^s_j=\partial_j a^s_i$ for any $i,j=1,\dots,n$). The idea is that the operators $a^s$ approximate $a$ as $s \to 0$.

\begin{proposition} \label{prop_mingione}
Let $\Omega\subset \mathbb{R}^n$ be a bounded convex domain. 
Let $a$ satisfy \eqref{hp_Mingione}, $f,h\in C^1([0,\infty))$ such that $h'\geq 0$ 
and let $u$ be a positive weak solution of 
\begin{equation} 
\begin{cases}\label{prob_appr}
\diver(a(\nabla u))  + f(u) = 0 & \text{ in }\Omega \,,\\
a(\nabla u ) \cdot \nu + h(u) = 0 & \text{ on } \partial \Omega \,.
\end{cases}
\end{equation}
Then we have
$$
\|a(\nabla u )\|_{W^{1,2}(\Omega)} \leq C \,,
$$
where $C$ depends only on $n$, $p$, $L$, $\mu$, the perimeter of $\Omega$, the $C^1$ norm of $u$ in $\Omega$ and the Lipschitz seminorm of $f$ and $h$ in $[0,\max u]$.
\end{proposition}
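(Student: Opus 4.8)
The plan is to establish the interior and boundary $W^{1,2}$ bound for $a(\nabla u)$ by a difference-quotient (Nirenberg translation) argument, carried out carefully near the boundary using the convexity of $\Omega$. Since $u$ is a weak solution of the (nondegenerate, by hypothesis \eqref{hp_Mingione}) equation $\diver(a(\nabla u)) = -f(u)$ with $a\in C^1$ and uniformly elliptic coefficients in the sense of \eqref{hp_Mingione}, standard elliptic regularity gives $u\in C^{1,\alpha}_{loc}(\overline\Omega)$, so all quantities involving $\|u\|_{C^1}$ and $\sup u$ are finite; the point is to make the $W^{1,2}$-estimate for $a(\nabla u)$ \emph{quantitative} with the stated dependence. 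First I would treat the interior: fix a ball, take the test function $\phi = \tau_{-e_k}\big(\zeta^2 \tau_{e_k} u\big)$ where $\tau_{e_k}$ is the finite-difference operator in direction $e_k$ and $\zeta$ a cutoff, plug it into the weak formulation \eqref{def_debole}, and use the monotonicity lower bound in \eqref{hp_Mingione}, namely $(a(\xi)-a(\eta))\cdot(\xi-\eta)\geq c\,(|\xi|^2+|\eta|^2+s^2)^{(p-2)/2}|\xi-\eta|^2$, together with the upper bound $|a(\xi)-a(\eta)|\leq C(|\xi|^2+|\eta|^2+s^2)^{(p-2)/2}|\xi-\eta|$, to absorb terms and obtain $\int \zeta^2 |\tau_{e_k}(a(\nabla u))|^2 \leq C$ uniformly in the step size, whence $\partial_k(a(\nabla u))\in L^2_{loc}$. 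The contribution of $f(u)$ on the right-hand side is controlled by the Lipschitz seminorm of $f$ on $[0,\max u]$ times $\|\nabla u\|_{L^2}$, which is finite by the $C^1$ bound.

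The genuinely delicate part is the estimate up to $\partial\Omega$, which is where convexity enters. Here I would flatten the boundary locally via a $C^2$ change of variables and take difference quotients only in directions \emph{tangential} to $\partial\Omega$; the normal derivative of $a(\nabla u)$ is then recovered a posteriori from the equation itself ($\partial_\nu(a(\nabla u)\cdot\nu)$ is expressed through the tangential derivatives and $f(u)$). Testing the weak formulation — now including the boundary term $-\int_{\partial\Omega} h(u)\phi\,d\sigma$ — against a tangentially-translated test function produces, after one integration by parts on $\partial\Omega$, a boundary integral whose leading term has a sign governed by the second fundamental form of $\partial\Omega$ and by $h'(u)$: convexity ($\mathrm{II}\geq 0$) and $h'\geq 0$ make this term have the \emph{good} sign, so it can be discarded rather than estimated. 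This is exactly analogous to the classical observation that convexity of the domain gives one-sided boundary control for Neumann/Robin problems (cf. the Reilly-type computations underlying Proposition \ref{prop_int_ineq}). The lower-order boundary contributions are bounded by the Lipschitz seminorm of $h$ on $[0,\max u]$ times the $C^1$ norm of $u$ and the perimeter of $\Omega$.

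Combining the interior estimate with the tangential boundary estimate, recovering the missing normal direction from the PDE, and summing over a finite cover of $\partial\Omega$ by boundary charts plus interior balls (finite by compactness of $\overline\Omega$, with the number of charts absorbed into the constant), yields $\|a(\nabla u)\|_{W^{1,2}(\Omega)}\leq C$ with $C$ depending only on the listed quantities. I expect the main obstacle to be bookkeeping the boundary term so that the second-fundamental-form contribution is isolated with the correct sign while all remaining boundary terms are genuinely lower order; in particular one must be careful that the change of variables flattening $\partial\Omega$ does not destroy the structure conditions \eqref{hp_Mingione} beyond constants controlled by the $C^2$ geometry of $\partial\Omega$, and — in the range $1<p<2$, where $a^s$ is merely $C^1$ away from the origin — that the difference-quotient manipulations are justified near points where $\nabla u$ may vanish, which is handled by the uniform nondegeneracy ($s$ may be taken $>0$ in the approximation, or one works with the regularized integrand $(|\xi|^2+s^2)^{(p-2)/2}$ throughout).
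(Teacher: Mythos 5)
Your strategy is viable and it isolates the same two structural facts that drive the paper's proof: after one integration by parts the boundary contribution splits into a second-fundamental-form term, which is nonnegative by convexity of $\Omega$, and a term carrying $h'(u)$, which is nonnegative by hypothesis, so both can be discarded rather than estimated. The implementation, however, is genuinely different. You propose the classical Nirenberg route: interior difference quotients, local flattening of $\partial\Omega$, tangential translations near the boundary, and recovery of the missing normal derivative from the nondegenerate equation, followed by a covering argument. The paper never flattens the boundary and never takes difference quotients. Instead it (i) approximates $\Omega$ from inside by smooth convex domains $\Omega_k$ and mollifies the operator into $a^l$ satisfying \eqref{hp_Mingione} with parameter $s_l\to 0$; (ii) replaces $u$ by solutions $u_{k,l}$ of auxiliary problems in which $f(u)$ and $h(u)$ are \emph{frozen} as data (so uniqueness up to constants and convergence $u_{k,l}\to u$ are easy); and (iii) tests the weak formulation with $\psi=\partial_m(\varphi\zeta_\delta)$ for a cutoff $\zeta_\delta$ collapsing onto $\partial\Omega_k$, choosing $\varphi=a^l_m(\nabla u_{k,l})$ and using the coarea formula on the parallel surfaces $\partial\Omega^t_{k,\delta}$ to produce the boundary identity \eqref{int3} globally, with the second fundamental form arising from $\partial_m\nu_i$. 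Your approach buys familiarity and avoids the three-parameter approximation, at the price of chart-by-chart bookkeeping, preservation of the structure conditions under the flattening map, and a division by the (possibly degenerate) coefficient $A_{\nu\nu}=\partial a(\nabla u)\nu\cdot\nu$ when recovering $u_{\nu\nu}$ -- all surmountable (the degeneracies cancel against $|\partial a|$ up to a factor $L/\mu$, and $s>0$ handles the range $1<p<2$ near critical points, exactly as you anticipate), but requiring the care you yourself flag. The paper's approach buys a coordinate-free, global identity in which the sign of $\mathrm{II}$ and of $h'$ appears transparently, at the price of justifying the limits $l\to\infty$, $\delta\to 0$, $k\to\infty$. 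One small caution on your interior step: converting $\int\zeta^2\,\tau_{e_k}(a(\nabla u))\cdot\tau_{e_k}(\nabla u)$ into $\int\zeta^2|\tau_{e_k}(a(\nabla u))|^2$ uses the ratio of the upper and lower structure bounds, and with the exponents as written in \eqref{hp_Mingione} this ratio is controlled by $L/\mu$ and $\|\nabla u\|_\infty$ uniformly in $s$, so the constant does not blow up as the regularization is removed -- this should be stated explicitly rather than left implicit.
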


\begin{proof}



We follow the proof of \cite[Proposition 2.8]{CFR}. \\
We approximate $\Omega$ by a sequence of open convex domains $\{\Omega_k\}$ such that $\Omega_k \subseteq \Omega$ and $\partial \Omega_k$ is smooth. 
Also, we fix a point $\bar x \in \cap_k\Omega_k$, and for $k$ fixed we consider the following problem
\begin{equation}
\begin{cases}
\diver(a(\nabla u_k)) + f(u) = 0 & \text{ in } \Omega_k
\\
u_k(\bar x)=u(\bar x)\\
a(\nabla  u_k)\cdot\nu+h(u)=0 & \text{ on } \partial\Omega_k\,,
\end{cases}
\end{equation}
which has a unique solution $u_k$ that can be found by considering first the minimizer $v_{k}$ of the minimization problem
	$$
	\min_v\left\{\int_{\Omega_k}\left[\frac1p H(\nabla v)^p -f(u)v\right]\,dx+\int_{\partial\Omega_k} h(u)v d\sigma\,\right\},
	$$
	then setting $u_{k}(x):=v_{k}(x)+u(\bar x)-v_{k}(\bar x)$, and finally taking the limit of $u_{k}$ (note that the functions $ u_{k}$ are uniformly $C^{1,\theta}$ in every compact subset of $\Omega$, and uniformly H\"older continuous up to the boundary).
Let $\{\phi_l\}$ be a family of radially symmetric smooth mollifiers and define
\begin{equation} \label{ajz}
a^l(z):=(a\ast\phi_l)(z)\qquad \text{for $z\in\mathbb{R}^n$} \, .
\end{equation}
By standard properties of convolution and the continuity of $a(\cdot)$ we have that
$a^l$ converges to $ a $ uniformly on compact subset of $\mathbb{R}^n$. Moreover, $a^l$ satisfies \eqref{hp_Mingione} with $s$ replaced by $s_l$, where $s_l\to 0$ as $l\to \infty$.

Let $u_{k,l}$ be a solution of 
\begin{equation}\label{approx_1}
\begin{cases}
\diver(a^l(\nabla u_{k,l})) + f(u) = 0 & \text{ in } \Omega_k
\\
a^l(\nabla u_{k,l})\cdot\nu+h(u)=0 & \text{ on } \partial\Omega_k \,,
\end{cases}
\end{equation}
which can be constructed analogously to $u_k$. We notice that $u_{k,l}$ is unique up to an additive constant.
Also, because $u$ is locally bounded and $\Omega_k$ is smooth, the functions $u_{k,l}$ are $C^{1,\theta}(\overline\Omega_k) $, uniformly in $l$. In particular, assuming without loss of generality that $u_{k,l}(\bar x)=u(\bar x)$ for some fixed point $\bar x \in \Omega_k$, as $l \to \infty$ one sees that $u_{k,l}$ converges in $C^1_{\rm loc}$ to the unique solution $\bar u_k$ of 
\begin{equation}\label{approx_1bis}
\begin{cases}
\diver(a(\nabla \bar u_k)) + f(u) = 0 & \text{ in } \Omega_k
\\
\bar u_k(\bar x)=u(\bar x)\\
a(\nabla \bar u_k)\cdot\nu+h(u)=0 & \text{ on } \partial\Omega_k \,.
\end{cases}
\end{equation}
The function $u_k$ is also a solution of \eqref{approx_1bis}, so by uniqueness that $\bar u_k=u_k$ and therefore
$u_{k,l}$ converges to $u_k$ as $l \to \infty$. In the same way, $u_k\to u$ as $k\to \infty$.


Since $u$ is uniformly positive in $\Omega_k$, then also $u_k,u_{k,l}$ are uniformly positive inside $\Omega_k$. 
We multiply the equation in \eqref{prob_appr} by $\psi\in C^\infty_c(\R^n)$ and integrate over $\Omega_k$ to get
\begin{equation*}\int_{\Omega_k}\diver(a^l(\nabla u_{k,l}))\psi\, dx =-\int_{\Omega_k} f(u)\psi\, dx,
\end{equation*}
that together with the divergence theorem gives
\begin{equation*}\label{previous}
\begin{aligned}
-\int_{\Omega_k} a^l (\nabla u_{k,l} )\cdot\nabla\psi\, dx+\int_{\partial\Omega_k}\psi a^{l} (\nabla u_{k,l} )\cdot\nu\, d\sigma=-\int_{\Omega_k} f(u)\psi\, dx\, .
\end{aligned}
\end{equation*}
Since $u$ satisfies \eqref{approx_1} we obtain that 
\begin{equation}\label{previous_bis}
\begin{aligned}
\int_{\Omega_k} a^{l} (\nabla u_{k,l} )\cdot\nabla\psi\, dx=\int_{\Omega_k} f(u)\psi\, dx- \int_{\partial \Omega_k} h(u) \psi d\sigma\,.
\end{aligned}
\end{equation}
Let  $\varphi \in C^\infty_c(\R^n)$, and for
$\delta>0$ small define the set  
$$
\Omega_{k,\delta}:=\{ x\in\Omega_k \, : \,  \dist(x,\partial\Omega_k)>\delta\}\, .
$$ 
The domain $\Omega_{k}\cap {\rm supp}(\varphi)$ is smooth, then for $\delta$ small enough  
$\Omega_{k,\delta}\setminus\Omega_{k,2\delta}$ is of class $C^{\infty}$ inside the support of $\varphi$.
In particular, every $x \in (\Omega_{k,\delta}\setminus\Omega_{k,2\delta})\cap {\rm supp}(\varphi)$ can be written as 
$$
x=y - |x-y|\nu(y)
$$
where $y=y(x)\in \partial \Omega_{k,\delta}$ is the projection of $x$ on $\partial \Omega_{k,\delta}$ and $\nu(y)$ is the outward normal to $\partial \Omega_{k,\delta}$ at $y$. 
In addition, the set $(\Omega_{k,\delta}\setminus\Omega_{k,2\delta})\cap {\rm supp}(\varphi)$ can be parametrized on $\partial\Omega_{k,\delta}$ by a $C^1$ function $g$ (see \cite[Formula 14.98]{GT}).

Let us consider a cut-off function $\zeta_\delta:\Omega_k \to [0,1]$ satisfying $\zeta_\delta=1$ in $\Omega_{k,2\delta}$, $\zeta_\delta=0$ in $\Omega_k\setminus\Omega_{k,\delta}$, and 
$$
\nabla\zeta_\delta(x)=-\frac{1}{\delta}\nu(y(x))\qquad \text{inside }\Omega_{k,\delta}\setminus\Omega_{k,2\delta}\,.
$$
Using $\psi=\partial_m(\varphi\zeta_\delta)$ in \eqref{previous_bis} with $m\in \{1,\ldots,n\}$ and integrating by parts, we get 
\begin{equation*}
\begin{aligned}
\sum_{i=1}^n\left(\int_{\Omega_k} \partial_m a_i^l(\nabla u_{k,l})\zeta_\delta\partial_i\varphi\, dx+\int_{\Omega_k} \partial_m a_i^l(\nabla u_{k,l})\varphi\partial_i\zeta_\delta\, dx\right)=\int_{\Omega_k} \partial_m(f(u))\varphi\zeta_\delta\, dx\,
\end{aligned}
\end{equation*}
where we use the notation $a^{l}=(a_1^l,\ldots,a_n^l)$ to denote the components of the vector field $a^l$. \\
From the definition of $\zeta_\delta$, we clearly have  
\begin{equation*}\label{lim1}
\lim_{\delta\rightarrow 0}\int_{\Omega_k} \partial_m a_i^l(\nabla u_{k,l})\zeta_\delta\partial_i\varphi\, dx=\int_{\Omega_k} \partial_m a_i^l(\nabla u_{k,l})\partial_i\varphi\, dx\, .
\end{equation*}
Also, if we set
$$
w(x)=\partial_m a_i^l(\nabla u(x))\varphi(x)\,,
$$
by the coarea formula we have
\begin{equation}\label{Magna}
\begin{aligned}
\int_{\Omega_{k,\delta}\setminus\Omega_{k,2\delta}} w\partial_i\zeta_\delta\, dx &  = - \frac{1}{\delta} \int_{\Omega_{k,\delta}\setminus\Omega_{k,2\delta}}\nu_{i}(y(x))w dx \\
& =  - \frac{1}{\delta} \int_{\delta}^{2 \delta} dt \int_{\partial\Omega_{k,\delta}} \nu_{i}(y(x))w(y-t\nu(y))|{\rm det}(Dg)|d\sigma(y) \\ & =  -  \int_{1}^{2} ds \int_{\partial\Omega_{k,s\delta}}  w(y - s \delta \nu(y))  \nu_i(y) |{\rm det}(Dg)|d\sigma(y) \, .
\end{aligned}
\end{equation}
Since $w \in C^0$, we can pass to the limit and obtain
\begin{equation*}
\lim_{\delta \to 0} \int_{\Omega_k} \partial_m a_i^l(\nabla u_{k,l})\varphi\partial_i\zeta_\delta\, dx = - \int_{\partial\Omega_k}  \partial_m a_i^l(\nabla u_{k,l})\varphi\nu_i d \sigma \,.
\end{equation*}
Hence, we proved that
\begin{equation}\label{11.16}
\begin{aligned}
\sum_{i=1}^n\left(\int_{\Omega_k} \partial_m a_i^l(\nabla u_{k,l})\partial_i\varphi\, dx- \int_{\partial\Omega_k}  \partial_m a_i^l(\nabla u_{k,l})\varphi\nu_i d \sigma\right)=\int_{\Omega_k} \partial_m(f(u))\varphi\, dx\,
\end{aligned}
\end{equation}
Now, 
let 
$$
\Omega_{k,\delta}^{t}:=\{ x\in\Omega_{k,\delta} \, : \, \dist(x,\partial\Omega_{k,\delta})>t\}\, .
$$
If $x \in (\Omega_{k,\delta}\setminus\Omega_{k,2\delta})\cap {\rm supp}(\varphi)$ with $x=y - t \nu(y)$, then $x\in\partial\Omega_{k,\delta}^{t}$ and the outward normal to $\partial\Omega_{k,\delta}^{t}$ at $x$ coincides with the outward normal to $\partial\Omega_{k,\delta}$ at $y$. 
Thus, we have
\begin{equation*}\label{int2}
\begin{aligned}
\partial_m a_i^l(\nabla u_{k,l}(x))\varphi(x)\nu_i(x)=&\,\varphi(x)\partial_m(a^l(\nabla u_{k,l}(x))\cdot\nu(x))\\ 
&-\varphi(x)a_i^l(\nabla u_{k,l}(x))\partial_m\nu_i(x)\, .
\end{aligned}
\end{equation*}
Let $m\in\{1,\dots,n\}$. Then, writing $\varphi=a_m^l(\nabla u_{k,l})$, we obtain
\begin{equation}\label{int3}
\begin{aligned}
\partial_m a_i^l(\nabla u_{k,l}(x))\varphi(x)\nu_i(x)=&\,a_m^l(\nabla u_{k,l}(x))\partial_m\bigl(a^l(\nabla u_{k,l}(x))\cdot\nu(x)\bigr)\\ 
&-a_m^l(\nabla u_{k,l}(x))a_i^l(\nabla u_{k,l}(x))\partial_m\nu_i(x) \, .
\end{aligned}
\end{equation}
We notice that $\partial_m\nu_i (x)$ is the second fundamental form $\mathrm{II}_{x}^{t}$ of $\partial\Omega_{k,\delta}^{t}$ at $x$:
$$
\sum_{i,m=1}^n\partial_m\nu_i (x)a_i^l(\nabla u_{k,l}(x))a_m^l(\nabla u_{k,l}(x))=\mathrm{II}_{x}^{t}(A^{k,l}(x)),A^{k,l}(x))\, .
$$
where $A^{k,l}(x)$ is the tangential component of $a^l(\nabla u_{k,l}(x))$.  
Moreover, $\mathrm{II}_{x}^{t}$ is non-negative definite because $\Omega_{k}$ is convex. This means that
\begin{equation}\label{convexity1}
\sum_{i,m=1}^n\partial_m\nu_i (x)a_i^l(\nabla u_{k,l}(x))a_m^l(\nabla u_{k,l}(x))\geq 0\, .
\end{equation}
Hence \eqref{int3} becomes
\begin{align}\label{int4}
\begin{aligned}
\sum_{i,m=1}^n\partial_m a_i^l(\nabla u_{k,l}(x))\varphi(x)\nu_i(x)
&\leq  \sum_{i,m=1}^na_m^l(\nabla u_{l}(x))\partial_m\bigl(a^l(\nabla u_{k,l}(x))\cdot\nu(x)\bigr)\, \\
\end{aligned}
\end{align}
Recalling that $\varphi=a_m^l(\nabla u_{k,l})$ and the boundary condition of \eqref{approx_1} we get
\begin{equation*}
\begin{aligned}
\sum_{i,m=1}^n\int_{\partial\Omega_k}  \partial_m a_i^l(\nabla u_{k,l})\varphi\nu_i d \sigma &\leq  \sum_{i,m=1}^n \int_{\partial\Omega_k}a_m^l(\nabla u_{k,l})\partial_m\bigl(a^l(\nabla u_{k,l})\cdot\nu\bigr)\, d\sigma\\
&=n \int_{\partial\Omega_{k}}\,a^l(\nabla u_{k,l})\cdot \nabla \bigl(a^l(\nabla u_{k,l})\cdot\nu\bigr)\, d\sigma \\
&=-n \int_{\partial\Omega_{k}}h'(u)\,a^l(\nabla u_{k,l})\cdot \nabla (u) \, dx \\
\end{aligned}
\end{equation*}
Thus, by \eqref{11.16} we obtain

\begin{align*}
\nonumber\sum_{i,m=1}^n\int_{\Omega_k} \partial_m a_i^l(\nabla u_{k,l})\partial_i\left(a_m^l(\nabla  u_{k,l})\right)\, dx \leq 
& n\int_{\Omega_k}  \nabla (f(u))\cdot a^l(\nabla  u_{k,l})\, dx \\
\nonumber &-n \int_{\partial\Omega_k}h'(u)\,a^l(\nabla u_{k,l})\cdot \nabla u \, d\sigma \,.
\end{align*}

Now, proceeding as in \cite[Proposition 2.8]{CFR} we get 
\begin{align} 
\label{stima1}
\nonumber \int_{\Omega_k}|\nabla(a^l(\nabla u_{k,l}))|^2\, dx&\leq C\int_{\Omega_k}|a^l(\nabla u_{k,l})|^2\, dx+C \int_{\Omega_k} |\nabla (f(u))||a^l(\nabla  u_{k,l})|\, dx\\
 &-C \int_{\partial\Omega_k}h'(u)\,a^l(\nabla u_{k,l})\cdot \nabla (u) \, d\sigma.
\end{align}
The right hand side of \eqref{stima1} is uniformly bounded by a number depending only by the following quantities $\|u\|_{C^1(\Omega_k)}, |\partial\Omega_k|$ and by the Lipschitz constants of $f$ and $h$ on $[0,\max u]$.  Thus
we can pass to the limit first for $l\to \infty$ and then for $k\to \infty$
to deduce that
%
\begin{align*} 
\int_{\Omega}|\nabla(a(\nabla u))|^2\, dx&\leq C\int_{\Omega}|a(\nabla u)|^2\, dx+C \int_{\Omega} |\nabla (f(u))||a(\nabla  u)|\, dx\\
\nonumber &-C \int_{\partial\Omega}h'(u)\,H^p(\nabla u) \, d\sigma \,.
\end{align*}
Since $h' \geq 0$, we obtain
\begin{equation*} 
\int_{\Omega}|\nabla(a(\nabla u))|^2\, dx\leq C\int_{\Omega}|a(\nabla u)|^2\, dx+C \int_{\Omega} |\nabla (f(u))||a(\nabla  u)|\, dx \,,
\end{equation*}
which proves that $a(\nabla u ) \in W^{1,2}(\Omega)$.
\end{proof}

\section{Proof of Proposition \ref{prop_int_ineq}} \label{section_prop}
In this section we prove Proposition \ref{prop_int_ineq}, which is the main tool to prove Theorems \ref{thm_main} and \ref{thm_main1}. Our approach is based on a differential identity which involves the second elementary symmetric function $S^2(Q)$ of a $n\times n$ matrix $Q$, i.e. the sum of all the principal minors of $Q$ of order two:
\begin{equation}\label{eq200}
S^2(Q)=\frac12\sum_{i,j}S^2_{ij}(Q)q_{ij}\,,
\end{equation}
where 
$$
S^2_{ij}(Q)=-q_{ji} + \delta_{ij} \tr (Q) \,.
$$
We will use the following Newton's type inequality
\begin{equation}\label{newtonIneq}
S^2{(Q)}\le\frac{n-1}{2n}\tr(Q)^2\, .
\end{equation}
which holds for any matrix  $Q=AB$, where $A$ and $B$ are two symmetric $n\times n$ matrices with $A$ positive semidefinite (see \cite[Lemma 3.2]{CS}).

Moreover, if ${\tr} (Q)\neq 0$ and equality holds in (\ref{newtonIneq}), then 
\begin{equation*}
Q=\frac{{\tr}(Q)}{n}\, {\rm Id} \,,
\end{equation*}
and $A$ is positive definite.

\medskip 

Before giving the proof of Proposition \ref{prop_int_ineq}, we need some preliminary manipulation of the solution and two preliminary lemmas. Let $u$ be a bounded solution of \eqref{pb_main2}, i.e. of
\begin{equation*} 
\begin{cases}
\diver (a(\nabla u))  + f(u) = 0 & \text{ in }\Omega \,,\\
a(\nabla u) \cdot \nu +h(u) = 0 & \text{ on } \partial \Omega \,,
\end{cases}
\end{equation*}
where
$$
a(\nabla u) = H^{p-1}(\nabla u)\nabla H(\nabla u)\,,
$$
and we recall that we set
$$
\Delta_p^H u := \diver (a(\nabla u)) \,.
$$
We consider the auxiliary function $v$, defined in the following way
\begin{equation}
\label{def_v}
u=v^{-\frac{n-p}{p}}\, ,
\end{equation}
and we set 
\begin{equation} \label{H_hat}
\hat H(\xi )= H(-\xi) \,.
\end{equation}
It is straightforward to verify that $v$ is a solution to
\begin{equation}\label{eq_v}
\Delta^{\hat H}_p v=
\hat f(v)+\dfrac{n(p-1)}{p}\dfrac{\hat H^p(\nabla v)}{v}\, 
\end{equation}
with boundary condition 
\begin{equation} \label{BC_Robin_v}
\hat a(\nabla v) \cdot \nu - \hat h(v) = 0 \quad \text{ on } \partial \Omega \,,
\end{equation}
where we set
$$
\hat a (\xi)=\hat H^{p-1}(\xi)\nabla \hat H(\xi),
$$
\begin{equation} \label{hat_f_v}
\hat f (v) = \left(\dfrac{p}{n-p}\right)^{p-1}f(v^{-\frac{n-p}{p}})v^{\frac{n(p-1)}{p}}\,,
\end{equation}
$$
 \hat h(v) = c_p v^{\frac{n}{p}  (p-1) } h(v^{-\frac{n}{p}+1}),
$$
and 
$$
c_p=\left ( \frac{p}{n - p}  \right)^{p-1}.
$$
Moreover, we set
\begin{equation}
\label{def_VW}
V(\xi)=\dfrac{\hat H^p(\xi)}{p},\, \xi\in\mathbb{R}^n, \,
\text{ and }\, W=\nabla[\nabla_\xi V(\nabla v)]=V_{\xi_i\xi_j}(\nabla v)v_{ij} \,.
\end{equation}
We notice that $W=\nabla \hat a(\nabla v)$, i.e. $W=(w_{ij})$ with $w_{ij}=\partial_j \hat a^i(\nabla v)$.

It is clear that $v$ inherits some regularity properties from $u$. In particular, $v$ is $C^{1,\alpha}$ regular and, since 
$$
\hat a(\nabla v) = -\left(\dfrac{p}{n-p}\right)^{p-1}u^{-\frac{n(p-1)}{n-p}}a(\nabla u)
$$
\begin{equation*}
w_{ij} =-\left(\dfrac{p}{n-p}\right)^{p-1}\left[u^{-\frac{n(p-1)}{n-p}}\partial_j a^i(\nabla u) - \dfrac{n(p-1)}{n-p}u^{\frac{p(1-n)}{n-p}}u_i\, a^j (\nabla u)\right] \,,
\end{equation*}
from Proposition \ref{prop_mingione} and since $u$ is $C^{1,\alpha}$ regular we have that $\hat a(\nabla v) \in W^{1,2}_{loc}(\Omega)$ (see also \cite[Lemma 3.1]{CFR}).

\medskip

The starting point is the following integral identity which was proved in \cite[Lemma 3.3]{CFR}.

\begin{lemma}[{\cite[Lemma 3.3]{CFR}}] \label{lemma_integr1}
	Let $v$ be given by \eqref{def_v}, let $V$ and $W$ be as in \eqref{def_VW}.  Then, for any $\varphi\in C^{\infty}_c(\Omega)$, we have 
	\begin{equation}\label{BC_integral}
	\begin{aligned}
	\int_{\Omega}&\left(2v^{1-n} S^2(W)+(n-1) np(p-1)v^{-n-1}V^2(\nabla v)+(1-n)(2p-1)v^{-n}V(\nabla v)\Delta^{\hat H}_p v\right) \varphi \, dx\\=&-\int_{\Omega}\varphi_{j}\bigl(v^{1-n} S_{ij}^2(W)V_{\xi_i}(\nabla v)+(1-n)(p-1)v^{-n}V(\nabla v)V_{\xi_j}(\nabla v)\bigr)\, dx \, .
	\end{aligned}
	\end{equation}
\end{lemma}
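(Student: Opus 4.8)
The plan is to recast \eqref{BC_integral} as a pointwise divergence identity and then to dispose of the regularity issues by approximation. Setting
$$
X_j := v^{1-n}S^2_{ij}(W)\,V_{\xi_i}(\nabla v) + (1-n)(p-1)\,v^{-n}V(\nabla v)\,V_{\xi_j}(\nabla v),
$$
and recalling that $\varphi$ has compact support, a single integration by parts shows that \eqref{BC_integral} is equivalent to the pointwise identity
$$
\Div X = 2v^{1-n}S^2(W) + (n-1)np(p-1)\,v^{-n-1}V^2(\nabla v) + (1-n)(2p-1)\,v^{-n}V(\nabla v)\,\Delta^{\hat H}_p v .
$$
So the core of the argument is to verify this identity when $v$ is smooth with $\nabla v\neq 0$, and then to transfer it to the solution at hand.

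First I would record the two structural facts responsible for the cancellations. (i) Since $w_{ij}=\partial_j\big(V_{\xi_i}(\nabla v)\big)=V_{\xi_i\xi_k}(\nabla v)\,v_{kj}$, the matrix $W$ is the Jacobian of the vector field $x\mapsto\nabla_\xi V(\nabla v(x))=\hat a(\nabla v)$; hence $\tr(W)=\Div\big(\nabla_\xi V(\nabla v)\big)=\Delta^{\hat H}_p v$, and the Newton tensor $S^2_{ij}(W)=-w_{ji}+\delta_{ij}\tr(W)$ is divergence free, $\sum_j\partial_j S^2_{ij}(W)=0$. (ii) Since $V=\hat H^p/p$ is positively $p$-homogeneous, Euler's relation gives $\xi\cdot\nabla_\xi V(\xi)=pV(\xi)$, and differentiating it, $\xi_jV_{\xi_j\xi_k}(\xi)=(p-1)V_{\xi_k}(\xi)$; evaluated at $\xi=\nabla v$ these give $v_jV_{\xi_j}(\nabla v)=pV(\nabla v)$ and, contracting with $W$, $v_jw_{ji}V_{\xi_i}(\nabla v)=(p-1)\,V_{\xi_i}(\nabla v)\,v_{ij}\,V_{\xi_j}(\nabla v)$.

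Then I would expand $\Div X$ directly. Differentiating the first summand of $X_j$: the term where the derivative falls on $S^2_{ij}(W)$ vanishes by (i); the term where it falls on $V_{\xi_i}(\nabla v)$ equals $v^{1-n}S^2_{ij}(W)w_{ij}=2v^{1-n}S^2(W)$ by \eqref{eq200}; and the term where it falls on $v^{1-n}$ equals $(1-n)v^{-n}v_jS^2_{ij}(W)V_{\xi_i}(\nabla v)$, which by $S^2_{ij}(W)=-w_{ji}+\delta_{ij}\tr(W)$ together with (ii) becomes $(1-n)v^{-n}\big[-(p-1)V_{\xi_i}(\nabla v)v_{ij}V_{\xi_j}(\nabla v)+pV(\nabla v)\Delta^{\hat H}_p v\big]$. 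Differentiating the second summand of $X_j$, and using $v_jV_{\xi_j}(\nabla v)=pV(\nabla v)$, $\partial_j\big(V(\nabla v)\big)=V_{\xi_i}(\nabla v)v_{ij}$ and $\partial_j\big(V_{\xi_j}(\nabla v)\big)=\Delta^{\hat H}_p v$, produces $(n-1)np(p-1)v^{-n-1}V^2(\nabla v)+(1-n)(p-1)v^{-n}V_{\xi_i}(\nabla v)v_{ij}V_{\xi_j}(\nabla v)+(1-n)(p-1)v^{-n}V(\nabla v)\Delta^{\hat H}_p v$. Summing the two contributions, the $V_{\xi_i}(\nabla v)v_{ij}V_{\xi_j}(\nabla v)$ terms cancel while the coefficients of $v^{-n}V(\nabla v)\Delta^{\hat H}_p v$ add up to $(1-n)(2p-1)$, which is precisely the asserted formula for $\Div X$; multiplying by $\varphi$ and integrating by parts yields \eqref{BC_integral} for smooth $v$.

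The genuinely delicate step is the removal of regularity. Here $v$ only inherits $C^{1,\alpha}$ regularity from $u$, one only knows $\hat a(\nabla v)=\nabla_\xi V(\nabla v)\in W^{1,2}_{loc}(\Omega)$ from Proposition \ref{prop_mingione} (so $W\in L^2_{loc}$), and for $p<2$ the map $V$ fails to be $C^2$ at the origin; thus the pointwise computation above is only legitimate for the smooth, non-degenerate approximations furnished by a regularization scheme as in the proof of Proposition \ref{prop_mingione}. I would establish \eqref{BC_integral} for each such approximation, where all derivatives exist classically, and then pass to the limit against a fixed $\varphi\in C^\infty_c(\Omega)$, using that $\nabla v$ converges locally uniformly while $\nabla\hat a(\nabla v)$ converges weakly in $L^2_{loc}$. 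The main obstacle is the single term quadratic in $W$, namely $\int v^{1-n}S^2(W)\varphi\,dx$: passing to the limit in it — in particular keeping $S^2(W)$ under control near the critical set $\{\nabla v=0\}$ — is exactly the point where the uniform $W^{1,2}$ estimate of Proposition \ref{prop_mingione} is essential. This is precisely the argument of \cite[Lemma 3.3]{CFR}.
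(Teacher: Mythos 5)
Your proposal is correct and follows the same route as the paper, which simply invokes the pointwise divergence identity \eqref{BC_anis} from \cite[Lemma 4.1]{BiCi} and then passes to the integral form by the approximation argument of \cite[Lemma 3.3]{CFR}. Your explicit verification of the divergence identity (via the divergence-free Newton tensor and the Euler relations for the $p$-homogeneous $V$) checks out, and you correctly identify the $W^{1,2}$ bound of Proposition \ref{prop_mingione} as the key to handling the term $S^2(W)$ in the limit.
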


\begin{proof}
The identity \eqref{BC_integral} follows from the following differential identity which was proved in \cite[Lemma 4.1]{BiCi}:
\begin{equation}\label{BC_anis}
	\begin{aligned}
	2v^{1-n} S^2(W)+&(n-1)np(p-1)v^{-n-1}V^2(\nabla v)+(1-n)(2p-1)v^{-n}V(\nabla v)\Delta^{\hat H}_p v \, ,\\
	& = \,\diver\bigl(v^{1-n} S_{ij}^2(W)V_{\xi_i}(\nabla v)+(1-n)(p-1)v^{-n}V(\nabla v)\nabla_\xi V(\nabla v)\bigr) \,. 
	\end{aligned}
	\end{equation}
This identity holds pointwise for smooth functions $v$ and $V$, and \eqref{BC_integral} is its integral counterpart which can be proved by approximation. Since the proof is the same as the one for \cite[Lemma 3.3]{CFR}, we omit the proof.
\end{proof}


In order to state the next lemma, we consider $\Omega$ a bounded smooth domain and we recall some notation that we used in the proof of Proposition \ref{prop_mingione}. In particular, for $\delta>0$ small we define 
$$
\Omega_{\delta}:=\{ x\in\Omega \, : \,  \dist(x,\partial\Omega)>\delta\}\, .
$$ 
We may and do suppose that for $\delta$ small enough $\Omega_{\delta}\setminus\Omega_{2\delta}$ is of class $C^2$ (see the proof of Proposition \ref{prop_mingione} for the details). In particular, every $x \in \Omega_{\delta}\setminus\Omega_{2\delta}$ can be written as 
\begin{equation}\label{ptox}
x=y - |x-y|\nu(y)
\end{equation}
where $y=y(x)\in \partial \Omega_{\delta}$ is the projection of $x$ on $\partial \Omega_{\delta}$ and $\nu(y)$ is the outward normal to $\partial \Omega_{\delta}$ at $y$. Now we consider a cut-off function $\zeta_\delta:\Omega \to [0,1]$ satisfying $\zeta_\delta=1$ in $\Omega_{2\delta}$, $\zeta_\delta=0$ in $\Omega\setminus\Omega_{\delta}$, and 
$$
\nabla\zeta_\delta(x)=-\frac{1}{\delta}\nu(y(x))\qquad \text{inside }\Omega_{\delta}\setminus\Omega_{2\delta}\,.
$$
With these notations and from Lemma \ref{lemma_integr1}, we obtain the following result.

\begin{lemma} \label{lemma_boundary}
Let $\Omega$ be a bounded smooth domain and let $v$ be given by \eqref{def_v}. Let $V$ and $W$ be as in \eqref{def_VW}.  Then,
%

\begin{multline}\label{lem3.2}
\int_{\Omega}\left(2v^{1-n} S^2(W)+(n-1) np(p-1)v^{-n-1}V^2(\nabla v)+(1-n)(2p-1)v^{-n}V(\nabla v)\Delta^{\hat H}_p v\right)\zeta_\delta\, dx =\\ \frac{1}{\delta}\int_{\Omega_\delta\setminus\Omega_{2\delta}} \left(- \nabla (\hat a(\nabla v)\cdot\nu) \cdot \hat a(\nabla v) +  \hat a(\nabla v)\cdot\nu\left(\frac{p-1}{p} \frac{\hat H^p(\nabla v)}{v} + \hat f(v)\right) + \mathrm{II}(\hat a^T(\nabla v),\hat a^T(\nabla v))\right)v^{1-n}\, dx\,.
\end{multline}

\end{lemma}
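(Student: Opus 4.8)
The plan is to use the Lipschitz cut-off $\zeta_\delta$ as a test function in Lemma \ref{lemma_integr1} and then rewrite the resulting boundary-layer integral into the form on the right-hand side of \eqref{lem3.2}. First I would extend the integral identity \eqref{BC_integral} from $\varphi\in C^\infty_c(\Omega)$ to $\varphi\in W^{1,\infty}(\Omega)$ with compact support in $\Omega$. Both sides of \eqref{BC_integral} are linear in $\varphi$; the coefficient of $\varphi$ on the left-hand side lies in $L^1_{loc}(\Omega)$ and the coefficient of $\nabla\varphi$ on the right-hand side lies in $L^2_{loc}(\Omega)$, because $v$ inherits the local $C^{1,\alpha}$ regularity and the positivity of $u$ — so that $V(\nabla v)$, $\nabla_\xi V(\nabla v)=\hat a(\nabla v)$, $v^{\pm1}$ and $\Delta^{\hat H}_p v=\hat f(v)+\tfrac{n(p-1)}{p}\tfrac{\hat H^p(\nabla v)}{v}$ are bounded and continuous on compact subsets of $\Omega$ — while $W=\nabla\hat a(\nabla v)\in L^2_{loc}(\Omega)$ by Proposition \ref{prop_mingione}, whence $S^2(W)\in L^1_{loc}$ and $S^2_{ij}(W)\in L^2_{loc}$. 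Mollifying $\zeta_\delta$ and passing to the limit then yields \eqref{BC_integral} with $\varphi=\zeta_\delta$, whose support $\overline{\Omega_\delta}$ is a compact subset of $\Omega$; in particular no boundary integral over $\partial\Omega$ is produced, so \eqref{BC_Robin_v} is not used in this lemma.

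Next I would insert the explicit form of $\nabla\zeta_\delta$, which vanishes outside $\Omega_\delta\setminus\Omega_{2\delta}$ and equals $-\tfrac1\delta\nu(y(x))$ there. The left-hand side of \eqref{BC_integral} with $\varphi=\zeta_\delta$ is then precisely the left-hand side of \eqref{lem3.2}, and its right-hand side becomes
\[
\frac1\delta\int_{\Omega_\delta\setminus\Omega_{2\delta}} v^{1-n}\Bigl(\nu_j S^2_{ij}(W)\,\hat a^i(\nabla v)+(1-n)(p-1)\,v^{-1}V(\nabla v)\,(\hat a(\nabla v)\cdot\nu)\Bigr)\,dx ,
\]
with $\nu=\nu(y(x))$. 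It thus remains to identify this integrand with the one on the right-hand side of \eqref{lem3.2}. To do so I would use $S^2_{ij}(W)=-w_{ji}+\delta_{ij}\tr(W)$, the relations $w_{ji}=\partial_i\bigl(\hat a^j(\nabla v)\bigr)$ and $\tr(W)=\diver\bigl(\hat a(\nabla v)\bigr)=\Delta^{\hat H}_p v$, and the equation \eqref{eq_v}; this gives
\[
\nu_j S^2_{ij}(W)\,\hat a^i(\nabla v)=-\nu_j\,\hat a^i(\nabla v)\,\partial_i\hat a^j(\nabla v)+(\hat a(\nabla v)\cdot\nu)\Bigl(\hat f(v)+\tfrac{n(p-1)}{p}\tfrac{\hat H^p(\nabla v)}{v}\Bigr).
\]
Since $V(\nabla v)=\tfrac1p\hat H^p(\nabla v)$, the two terms proportional to $(\hat a(\nabla v)\cdot\nu)\,\hat H^p(\nabla v)/v$ combine with total coefficient $\tfrac{n(p-1)}{p}-\tfrac{(n-1)(p-1)}{p}=\tfrac{p-1}{p}$, reproducing the factor $(\hat a(\nabla v)\cdot\nu)\bigl(\tfrac{p-1}{p}\tfrac{\hat H^p(\nabla v)}{v}+\hat f(v)\bigr)$ of \eqref{lem3.2}.

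For the remaining term $-\nu_j\,\hat a^i(\nabla v)\,\partial_i\hat a^j(\nabla v)$ I would add and subtract $\hat a^i(\nabla v)\hat a^j(\nabla v)\,\partial_i\nu_j$, which turns it into $-\nabla\bigl(\hat a(\nabla v)\cdot\nu\bigr)\cdot\hat a(\nabla v)+\hat a^i(\nabla v)\hat a^j(\nabla v)\,\partial_i\nu_j$. On $\Omega_\delta\setminus\Omega_{2\delta}$ the field $x\mapsto\nu(y(x))$ is, up to sign, the gradient of $\dist(\cdot,\partial\Omega_\delta)$, so $(\partial_i\nu_j)$ is symmetric and annihilates $\nu$; hence $\hat a^i(\nabla v)\hat a^j(\nabla v)\,\partial_i\nu_j=\mathrm{II}\bigl(\hat a^T(\nabla v),\hat a^T(\nabla v)\bigr)$, the second fundamental form at $x$ of the level set of $\dist(\cdot,\partial\Omega_\delta)$ through $x$ — exactly the geometric computation already carried out in the proof of Proposition \ref{prop_mingione} (see the lines preceding \eqref{convexity1}). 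Collecting the three contributions gives the integrand of \eqref{lem3.2}, completing the proof.

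The hard part is not any single estimate but two points of care: justifying that $\zeta_\delta$ is admissible in \eqref{BC_integral} (which rests on the $W^{1,2}_{loc}$ regularity of $\hat a(\nabla v)$ from Proposition \ref{prop_mingione}), and the algebraic bookkeeping in rewriting the collar integrand — in particular the cancellation converting $\tfrac{n(p-1)}{p}$ into $\tfrac{p-1}{p}$ and the identification of the second fundamental form from the structure of the distance function near $\partial\Omega_\delta$.
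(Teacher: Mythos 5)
Your proposal is correct and follows essentially the same route as the paper: test \eqref{BC_integral} with $\varphi=\zeta_\delta$ (justified by the $W^{1,2}_{loc}$ regularity of $\hat a(\nabla v)$ from Proposition \ref{prop_mingione}), then rewrite the collar integrand using $S^2_{ij}(W)=-w_{ji}+\delta_{ij}\tr(W)$, the equation \eqref{eq_v}, and the identification of $\partial_i\nu_j$ with the second fundamental form of the level sets of the distance function. The coefficient cancellation $\tfrac{n(p-1)}{p}-\tfrac{(n-1)(p-1)}{p}=\tfrac{p-1}{p}$ and the splitting $-\nu_j\hat a^i\partial_i\hat a^j=-\nabla(\hat a\cdot\nu)\cdot\hat a+\mathrm{II}(\hat a^T,\hat a^T)$ are exactly the computations in the paper's proof.
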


\begin{proof}
The proof of this lemma is based on an approximation argument which is very similar to the one done in the proof of Proposition \ref{prop_mingione} (see also \cite[Proposition 2.8 and Lemma 3.4]{CFR}). Let $\zeta_\delta$ be the already cited cut-off function. By choosing $\varphi=\zeta_\delta$ in the statement of Lemma \ref{lemma_integr1}, we obtain 
\begin{multline}\label{F}
\int_{\Omega}\left(2v^{1-n} S^2(W)+(n-1) np(p-1)v^{-n-1}V^2(\nabla v)+(1-n)(2p-1)v^{-n}V(\nabla v)\Delta^{\hat H}_p v\right) \zeta_\delta\, dx \\=
\frac{1}{\delta}\int_{\Omega_\delta\setminus\Omega_{2\delta}} (v^{1-n} S^2_{ij}(W) V_{\xi_i}(\nabla v) + (1-n) (p-1) v^{-n} V(\nabla v)  V_{\xi_j}(\nabla v)) \nu_j\, dx \, ,
\end{multline}
where $\nu=\nu(y(x))$ is defined as in \eqref{ptox}.

From \eqref{F} it is clear that we have to evaluate the quantity 
\begin{equation} \label{Theta}
\Theta = v^{1-n} S^2_{ij}(W) V_{\xi_i}(\nabla v) \cdot \nu + (1-n) (p-1) v^{-n} V(\nabla v) \nabla_\xi V(\nabla v) \cdot \nu 
\end{equation}
in $\Omega_\delta\setminus\Omega_{2\delta}$. Since $V(\xi)=\hat H(\xi)^p/p$, $\nabla_\xi V (\xi) = a(\xi) $ and $w_{ji} = \partial_i a_j(\nabla v)$, from the definition of 
$$
S^2_{ij}(W)= -w_{ji} + \delta_{ij} \trace(W) = -w_{ji} + \Delta_p^{\hat H} v \,,
$$ 
we have
\begin{equation*}
\begin{aligned}
S^2_{ij}(W) V_{\xi_i}(\nabla v) \cdot \nu & = -w_{ji} V_{\xi_i} (\nabla v) \nu_j + \Delta_p^{\hat H} v \nabla_\xi V(\nabla v) \cdot \nu \\
& = - \partial_i \hat a_j(\nabla v) \hat a_i(\nabla v) \nu_j + \Delta_p^{\hat H} v \hat a(\nabla v) \cdot \nu \\
& = - \partial_i (\hat a(\nabla v) \cdot \nu)  \hat a_i (\nabla v) + \hat a_i(\nabla v) \hat a_j(\nabla v) \partial_i \nu_j + \Delta_p^{\hat H} v \hat a(\nabla v) \cdot \nu \\
& = - \nabla (\hat a(\nabla v)\cdot\nu) \cdot \hat a(\nabla v) +\hat a(\nabla v)\cdot\nu \Delta_p^{\hat H} v + \mathrm{II}_{x}(\hat a^T(\nabla v),\hat a^T(\nabla v)) \,,
\end{aligned}
\end{equation*}
where $\mathrm{II}_x$ and $\hat a^T(\nabla v)$ are defined as follows: if $x\in\partial\Omega_t$, with $t\in(\delta,2\delta)$, then $\mathrm{II}_{x}$ is the second fundamental form of $\partial\Omega_t$ evaluated at $x$ and $\hat a^T(\nabla v)$ is the tangential component of $\hat a(\nabla v(x))$ with respect to the tangent hyperplane of $\partial\Omega_t$ at $x$.

From \eqref{eq_v}, we obtain that
\begin{multline} \label{manana}
S^2_{ij}(W) V_{\xi_i}(\nabla v) \cdot \nu =  - \nabla (\hat a(\nabla v)\cdot\nu) \cdot\hat a(\nabla v) + \hat a(\nabla v)\cdot\nu\left(\frac{n}{p} (p-1) \frac{\hat H^p(\nabla v)}{v} + \hat f(v)\right)\\ + \mathrm{II}(\hat a^T(\nabla v),\hat a^T(\nabla v))  \,.
\end{multline}
Hence from \eqref{Theta} and \eqref{manana} we obtain 
$$
\Theta v^{n-1} = - \nabla (\hat a(\nabla v)\cdot\nu) \cdot \hat a(\nabla v) +  \hat a(\nabla v)\cdot\nu\left(\frac{p-1}{p} \frac{H^p(\nabla v)}{v} + \hat f(v)\right)\\ + \mathrm{II}(\hat a^T(\nabla v),\hat a^T(\nabla v))  \,,
$$
and then \eqref{lem3.2}.
\end{proof}

Now, we are ready to prove the main result of this section.

\begin{proof}[Proof of Proposition \ref{prop_int_ineq}]
	From Lemma \ref{lemma_boundary} and by applying Newton's inequality \eqref{newtonIneq} to $Q=W$, we obtain that 
	\begin{multline}\label{17.42}
	\int_{\Omega}\left(\dfrac{n-1}{n}v^{1-n} (\Delta^{\hat H}_p v)^2+n(n-1) p(p-1)v^{-1-n}V^2(\nabla v)+(1-n)(2p-1)v^{-n}V(\nabla v)\Delta^{\hat H}_p v\right)\zeta_\delta\, dx \\ 
	\geq \frac{1}{\delta}\int_{\Omega_\delta\setminus\Omega_{2\delta}} \left(- \nabla (\hat a(\nabla v)\cdot\nu) \cdot \hat a(\nabla v) +  \hat a(\nabla v)\cdot\nu\left(\frac{p-1}{p} \frac{{\hat H}^p(\nabla v)}{v} + \hat f(v)\right) + \mathrm{II}(\hat a^T(\nabla v),\hat a^T(\nabla v))\right)v^{1-n}\, dx \,.
	\end{multline}
and we notice that the equality is attained if and only if $W$ is a multiple of the identity, i.e. $W=\lambda(x) {\rm Id}$. We will come back later on the equality case, in order to characterize those $v$ such that the equality in \eqref{17.42} is attained.
	  
	From \eqref{eq_v} and after some tedious computation, we obtain that 
	
%

\begin{multline}\label{Irene_II}
	\dfrac{n-1}{n}c_p^2 \int_{\Omega}v^{n+1-2\frac{n}{p}}f^2(v^{-\frac{n-p}{p}})\zeta_\delta\, dx-\dfrac{n-1}{p} c_p \int_{\Omega}v^{-\frac{n}{p}}f(v^{-\frac{n-p}{p}})\hat H^p(\nabla v)\zeta_\delta\, dx \\ 
	\geq \frac{1}{\delta}\int_{\Omega_\delta\setminus\Omega_{2\delta}} \left[-v^{1-n}\nabla (\hat a(\nabla v)\cdot\nu) \cdot\hat a(\nabla v) + v^{1-n} \hat  a(\nabla v)\cdot\nu \frac{p-1}{p} \frac{\hat H^p(\nabla v)}{v}\right]\, dx \\
	+\frac{1}{\delta}\int_{\Omega_\delta\setminus\Omega_{2\delta}} v^{1-n}\hat a(\nabla v)\cdot\nu \hat f(v)\, dx+ \frac{1}{\delta}\int_{\Omega_\delta\setminus\Omega_{2\delta}} v^{1-n} \mathrm{II}(\hat a^T(\nabla v),\hat a^T(\nabla v))\, dx	\, .
\end{multline}

We write \eqref{Irene_II} in terms of $u$ by using \eqref{def_v} and the homogeneity properties of $H$ and, by dividing by $c_p^2$, we get

%

\begin{multline}\label{Irene_u}
	\dfrac{n-1}{n}\int_{\Omega}u^{\frac{n}{n-p} - p^* +1}f^2(u)\zeta_\delta\, dx-\dfrac{n-1}{n-p} \int_{\Omega}u^{\frac{n}{n-p}- p^*}f(u)H^p(\nabla u)\zeta_\delta\, dx \\ 
	\geq \frac{1}{\delta(n-p)}\int_{\Omega_\delta\setminus\Omega_{2\delta}} u^{\frac{n}{n-p} - p^* +1}  \left( (n-1)(p-1)\frac{H^p(\nabla u)}{u}a(\nabla u)\cdot\nu-(n-p)\nabla (a(\nabla u\cdot\nu)\cdot a(\nabla u)\right)\, dx \\    -  \frac{1}{\delta}\int_{\Omega_\delta\setminus\Omega_{2\delta}} a(\nabla u)\cdot\nu  f(u)  u^{\frac{n}{n-p} - p^* +1}\, dx
	+ \frac{1}{\delta}\int_{\Omega_\delta\setminus\Omega_{2\delta}} u^{\frac{n}{n-p} - p^* +1} \mathrm{II}(a^T(\nabla u),a^T(\nabla u))\, dx	\, .
\end{multline}

Now we simplify \eqref{Irene_u} by using the equation again. We multiply the equation in \eqref{pb_main} by 
$$
u^{\frac{n}{n-p} - p^* +1} f(u)\zeta_\delta
$$ 
and we integrate:
\begin{equation}\label{newnew}
\int_\Omega u^{\frac{n}{n-p} - p^* +1} f(u)\zeta_\delta \diver a( \nabla u)\, dx + \int_\Omega u^{\frac{n}{n-p} - p^* +1} f^2(u)\zeta_\delta\, dx = 0 \,.
\end{equation}
Since $\zeta_\delta$ has compact support in $\Omega$, from the divergence theorem we get
\begin{multline} \label{Alberto}
\frac{n-1}{n} \int_\Omega u^{\frac{n}{n-p} - p^* +1} f^2(u)\zeta_\delta\, dx - \frac{n-1}{n-p} \int_\Omega u^{\frac{n}{n-p} - p^*} f(u) H^p(\nabla u )\zeta_\delta \, dx \\
= -\frac{n-1}{n\delta} \int_{\Omega_\delta\setminus\Omega_{2\delta}} u^{\frac{n}{n-p} - p^* +1} f(u) a(\nabla u)\cdot\nu\, dx + \frac{n-1}{n} \int_\Omega u^{\frac{n}{n-p}} H^p(\nabla u) \Phi'(u)\zeta_\delta\, dx \,,
\end{multline}
where we recall that 
$$
\Phi(t) = \frac{f(t)}{t^{p^*-1}} \,.
$$
From \eqref{Irene_u} and \eqref{Alberto} we obtain 
\begin{multline*}
\frac{n-1}{n} \int_\Omega u^{\frac{n}{n-p}} H^p(\nabla u) \Phi'(u)\zeta_\delta\, dx \geq  -\frac{1}{n\delta} \int_{\Omega_\delta\setminus\Omega_{2\delta}} u^{\frac{n}{n-p} - p^* +1} f(u) a(\nabla u)\cdot\nu\, dx \\ 
+\frac{1}{\delta(n-p)}\int_{\Omega_\delta\setminus\Omega_{2\delta}} u^{\frac{n}{n-p} - p^* +1}  \left( (n-1)(p-1)\frac{H^p(\nabla u)}{u}a(\nabla u)\cdot\nu-(n-p)\nabla (a(\nabla u)\cdot\nu)\cdot a(\nabla u)\right)\, dx \\ 
+  \frac{1}{\delta}\int_{\Omega_\delta\setminus\Omega_{2\delta}} u^{\frac{n}{n-p} - p^* +1} \mathrm{II}(a^T(\nabla u),a^T(\nabla u))\, dx \,.
\end{multline*}
By taking the limit as $\delta\rightarrow 0$ (as we did in \eqref{Magna}) and by using the boundary condition $a(\nabla u)\cdot\nu+h(u)=0$ on $\partial\Omega$ we get 
\begin{multline*}
\frac{n-1}{n} \int_\Omega u^{\frac{n}{n-p}} H^p(\nabla u) \Phi'(u)\, dx \geq  \frac{1}{n} \int_{\partial \Omega} u^{\frac{n}{n-p} - p^* +1} f(u) h(u)\, d\sigma \\ 
+ \frac{1}{n-p} \int_{\partial \Omega} u^{\frac{n}{n-p} - p^* +1} H^p(\nabla u) \left( (n-p) h'(u) - (p-1)(n-1)\frac{h(u)}{u} \right)\, d\sigma \\ 
+  \int_{\partial \Omega} u^{\frac{n}{n-p} - p^* +1} \mathrm{II}(a^T(\nabla u),a^T(\nabla u))\, d\sigma \,,
\end{multline*}
%
%
which is \eqref{integral_ineq}. It remains to consider the equality case in \eqref{17.42}, i.e. when $W=\lambda(x) {\rm Id}$, for some function $\lambda (x)$. Notice that $\Delta_p^{\hat H} v = \tr (W)$ and from \eqref{eq_v} we have that 
$$
\lambda(x)= \frac{1}{n}\Delta^{\hat H}_p v(x)=\frac{1}{n} \hat f(v)+\dfrac{(p-1)}{p}\dfrac{\hat H^p(\nabla v)}{v}\,.
$$
We recall that $v \in C^{1,\alpha}_{loc}$ in $\Omega$ and $v \in C^{2,\alpha}_{loc}$ in $\Omega \setminus Z $, where $Z = \{ \nabla v = 0\}$. Since $f \in C^1$ then $\lambda \in C^{\alpha}_{loc}(\Omega) \cap  C^1_{loc}(\Omega \setminus Z)$. From the definition of $W$ we have that
$$
\partial_i(\hat a_j(\nabla v(x)))=\lambda(x)\delta_{ij}
$$
which implies that $\hat a(\nabla v)\in C^{1,\alpha}_{loc}(\Omega) \cap  C^2_{loc}(\Omega \setminus Z)$. 

For $i \in  \{1,\ldots,n\}$ and choosing $j \neq i$ we find
$$
\partial_i\lambda(x) = \partial_i\bigl(\partial_j(\hat a_j(\nabla v(x)))\bigr)=\partial_j\bigl(\partial_i(\hat a_j(\nabla v(x)))\bigr)=0
$$
for any $x \in \Omega \setminus Z$, which gives that $\lambda$ is constant on each connected component of $ \Omega \setminus Z$. This implies that  
$$
\nabla[\hat a(\nabla v(x))]=W(x)=\lambda \, {\rm Id}
$$ 
in each connected component of $\{\nabla v \neq 0\}$,
i.e. 
$$
\hat a(\nabla v(x))=\lambda (x-x_0)
$$ 
for some $x_0\in\overline{\Omega}$. Hence for any connected component of $\{\nabla v \neq 0\}$ there exist two constants $c_1$ and $c_1$ such that
$$
v(x)=c_1+c_2 \hat H_0(x-x_0)^{\frac{p}{p-1}} \,,
$$
where $\hat H_0$ is the dual norm of $\hat H$. Notice that $\nabla v (x)= 0$ if and only if $x=x_0$.

Now we observe that $Z$ can not have interior points. Since $v$ is not constant then $Z \neq \overline \Omega$. Assume by contradiction that $Z$ has interior points, and consider a connected component $E$ of the interior of $Z$. Let $z \in \partial E \setminus \partial \Omega$ and notice that $z \in \partial \{\nabla v \neq 0\}$. Since $v$ is constant in $E$ and $v$ is of class $C^{1,\alpha}$ then we must have that $v$ is centered at $z$, i.e.
$$
v(x)=c_1^i+c_2^i \hat H_0(x-z)^{\frac{p}{p-1}} 
$$
in any connected component $A_i$ of $\{\nabla v \neq 0\}$ such that $z \in \partial A$. Since $\nabla v = 0$ only at $z$, this implies that any connected component of $\{\nabla v \neq 0\}$ touches $E$ only at $z$. Hence there exist two disjoint connected components $A_1$ and $A_2$ of $\{\nabla v \neq 0\}$ cointaining $z$ on their boundary and such that $(\partial A_1 \cap \partial A_2) \setminus \{z\} \neq \emptyset$. This leads to a contradiction since we must have $c_1^1=c_1^2$ and $c_2^1=c_2^2$, which implies that $A_1$ and $A_2$ are not disjoint.

Since $Z$ has no interior points and $\lambda$ is continuous then we get that $\lambda \neq 0$ is constant, which implies 
$$
\nabla[\hat a(\nabla v(x))]=W(x)=\lambda \, {\rm Id} \qquad \text{in }\Omega\,,
$$ 
and hence 
$$
v(x)=c_1+c_2 \hat H_0(x-x_0)^{\frac{p}{p-1}} 
$$
in $\overline \Omega$. Since $\hat H_0(x) = H_0(-x)$, we find \eqref{Talenti_p} and the proof of Proposition \ref{prop_int_ineq}.
\end{proof}

\section{Proofs of the main results} \label{section_thm}
Once we have Proposition \ref{prop_int_ineq} the proof of Theorems \ref{thm_main}, \ref{thm_main1} easily follows.  

\medskip

\begin{proof}[Proof of Theorems \ref{thm_main} and \ref{thm_main1}] 
As done in the proof of Proposition \ref{prop_mingione}, we approximate $\Omega$ by a sequence of  bounded open convex domains $\{\Omega_k\}$ such that $\Omega_k \subseteq \Omega$ and $\partial \Omega_k$ is smooth. 
Also, we fix a point $\bar x \in \cap_k\Omega_k$, and for $k$ fixed we consider the following problem
\begin{equation}
\begin{cases}
\diver(a(\nabla u_k)) + f(u) = 0 & \text{ in } \Omega_k
\\
u_k(\bar x)=u(\bar x)\\
a(\nabla  u_k)\cdot\nu+h(u)=0 & \text{ on } \partial\Omega_k\,.
\end{cases}
\end{equation}
We notice that, for $k$ large enough, the functions $ u_{k}$ are uniformly $C^{1,\theta}$ in every compact subset of $\Omega$, and uniformly H\"older continuous up to the boundary. By considering \eqref{integral_ineq} for $u_k$, using the convexity of $\Omega_k$ and passing to the limit as $k\to \infty$, we obtain that  $u$ satisfies
\begin{multline} \label{integral_ineq_final_u}
(n-1) \int_\Omega u^{\frac{n}{n-p}} H^p(\nabla u) \Phi'(u)\, dx \geq   \int_{\partial \Omega} u^{\frac{n}{n-p} - p^* +1} f(u) h(u)\, d\sigma \\ 
+ n  \int_{\partial \Omega} u^{\frac{n}{n-p} - p^* +1} H^p(\nabla u) \left(  h'(u) - \frac{(p-1)(n-1)}{n-p}\frac{h(u)}{u} \right)\, d\sigma \,.
\end{multline}

Let assume that $\Phi'\leq 0$. We use $h=0$ for Theorem \ref{thm_main} and the condition \eqref{h_condition} for Theorem \ref{thm_main1} in Proposition \ref{prop_int_ineq} and we immediately obtain that 
\begin{equation*}
0\geq   n \int_{\partial \Omega} u^{\frac{n}{n-p} - p^* +1} \mathrm{II}(a^T(\nabla u),a^T(\nabla u))\, d\sigma \geq 0 \,,
\end{equation*}
where the last inequality follows from the convexity of $\Omega$. Hence, the equality case holds in 
\eqref{integral_ineq} which implies that either $u$ is constant or there exists $c_1,c_2 > 0$ and $x_0 \in \overline{\Omega}$ such that
\begin{equation*} 
u(x) = \left(c_1 + c_2 H_0(x_0-x)^{\frac{p}{p-1}} \right)^{-\frac{n-p}{p}}
\end{equation*}
for any $x \in \overline \Omega$. In the latter case we readily find a contradiction since $a(\nabla u) \cdot \nu + h(u)$  can not vanish on the whole $\partial \Omega$ (with $h=0$ in case of Theorem \ref{thm_main}) and we conclude.
\end{proof}

\noindent It is clear that Corollary \ref{corollary_critical} easily follows from Theorems \ref{thm_main} and \ref{thm_main1} and Lemma \ref{Lemma_Cianchi_prel}. 

\medskip

\noindent Theorem \ref{thm_schiffer} follows again by using Proposition \ref{prop_int_ineq}. In this case, thanks to the fact that $|\nabla u|=0$ on $\partial \Omega$, we do not need to assume that $\Omega$ is convex, since the last term in \eqref{integral_ineq} vanishes thanks to the boundary condition $|\nabla u|=0$.

\end{document}